\numberwithin{equation}{section}
\newcommand{\Gr}{\operatorname{Gr}}
\newcommand{\trianglerightneq}{\mathrel{\ooalign{\raisebox{-0.5ex}{\reflectbox{\rotatebox{90}{$\nshortmid$}}}\cr$\triangleright$\cr}\mkern-3mu}}
\newcommand{\Gal}{\operatorname{Gal}}
\theoremstyle{plain}
\newtheorem{theo}{Theorem}[section]
\newtheorem{lemm}[theo]{Lemma}
\newtheorem{prop}[theo]{Proposition}
\newtheorem{coro}[theo]{Corollary}
\theoremstyle{definition}
\newtheorem{defi}[theo]{Definition}
\newtheorem{rema}[theo]{Remark}
\newtheorem{exam}[theo]{Example}
\renewcommand{\p@enumii}{}
\def\@acknow{}%
\long\def\EarlyAcknow#1 \par{%
\def\@acknow{\abstractfont\subabstracthead*{Acknowledgments}%
#1\par}}%
\def\printabstract{
    \ifx\@abstract\empty\else\@abstract\fi\par%
    \ifx\@acknow\empty\else\@acknow\fi\par%
    }
\begin{document}

\author*[1]{\fnm{Koto} \sur{Imai}\,\orcid{https://orcid.org/0009-0008-2109-2035}}\email{imai.koto.541@gmail.com}
\affil*[1]{\orgdiv{Graduate School of Mathematical Sciences}, \orgname{The University of Tokyo}, \\\orgaddress{\street{3--8--1 Komaba}, \city{Meguro-ku}, \postcode{153--8914}, \state{Tokyo}, \country{Japan}}\\}

\title{Ramification groups of some finite Galois extensions of maximal nilpotency class over local fields of positive characteristic}

\abstract{
  We examine the ramification groups of finite Galois extensions over complete discrete valuation fields of equal characteristic $p>0$. Brylinski (1983) calculated the ramification groups in the case where the Galois groups are abelian. We extend the results of Brylinski to some non-abelian cases where the Galois groups are of order $\leq p^{p+1}$ and of maximal nilpotency class. 
}
\keywords{Ramification jump, Ramification group, Local field, Equal characteristic, Non-abelian Galois extension}
\pacs[Mathematical Subject Classification]{11S15, 12F10}
\EarlyAcknow{
The author wishes to thank Professor Takeshi Saito for suggesting this problem, giving helpful advice and ideas, and pointing out mistakes during the preparation of this paper.  }

\maketitle

\section{Introduction}

Let $K$ be a complete discrete valuation field of equal characteristic $p>0$. Assume that the residue field $k$ of $K$ is perfect. Let $L/K$ be a finite Galois extension of $K$. Then we can define a filtration of the Galois group, called ramification groups, as in \cite{Se}, IV, \S 3. 

This filtration is a convenient tool for studying the wild ramification. If $\Gal(L/K)$ is abelian, the Hasse-Arf theorem ensures that the upper ramification jumps are integer. Moreover, we can find the concrete values of these jumps using results by Brylinski \cite{Br}. 

On the other hand, when the extension is no longer abelian, the ramification jumps are not necessarily integer. Furthermore, to the best of the author's knowledge, the jumps have not been calculated explicitly in most cases. In this paper, we present the values of the upper ramification jumps for some of the finite non-abelian extensions, expecting that our results provide a foothold for obtaining more general conclusions in the future.

If $L/K$ is a totally ramified Artin-Schreier extension, then $L/K$ admits a defining equation of the form $x^p-x=a$ with $a$ being an element of $K$ whose valuation $v(a)$ is negative and prime to $p$. It is well known that the unique jump of the upper ramification groups is located at $-v(a)$.

In this study, we will generalize this relationship between the defining equation and the ramification jumps to any totally ramified finite non-abelian Galois $p$-extensions of $K$ with Galois group of particular structures. Examples of such extensions include, but not limited to, any totally ramified Galois extension of $K$ with Galois group isomorphic to the Heisenberg group over $\mathbb{F}_p$. We will present some conditions on the defining equation under which the ramification jumps can be calculated from the coefficients of the defining equation. We will also show that there always exists a defining equation satisfying the conditions, if the Galois group is isomorphic to $G(n,\mathbb{F}_p)$ defined in Definition \ref{dfb} for some $2\leq n\leq p$.

The group $G(n,\mathbb{F}_p)$ is of maximum nilpotency class, i.e., the descending central series of $G(n,\mathbb{F}_p)$ is the longest among the groups of the same order. Combined with the abelian cases, the calculation of the ramification groups for this case should give us some insight that is useful when calculating the ramification groups of non-abelian Galois extensions in general. This is a motivation for us to consider the Galois extensions with Galois group isomorphic to $G(n,\mathbb{F}_p)$.

Our results are related to a paper \cite{Ab}, in which Abrashkin calculated the upper ramification groups of a profinite extension $K_p/K$ over some local field $K$ of equal characteristic $p$. The extension $K_p/K$ considered in his paper is the composition of all finite Galois $p$-extension over $K$ of nilpotency class less than $p$. In \cite{Ab}, the upper ramification groups were expressed in terms of generators of the Galois group of $K_p/K$. Thus we can compute the ramification groups of some subextension $L/K$ of $K_p/K$ using the results of \cite{Ab}, if we know which generators generate the subgroup $\Gal(K_p/L)$ of $\Gal(K_p/K$).

In this paper, by contrast, we express the ramification jumps of the extension $M_n/K$ defined in Definition \ref{dfd} using the coefficients of a defining equation of $M_n/K$. We evaluate the values of the ramification jumps by using some elementary equalities and inequalities, such as the ones proven in Lemma \ref{lah} and Lemma \ref{laf}.

This paper consists of three parts: Section \ref{sa}, Section \ref{sb}, and Section \ref{sc}. In the first two sections, we give some preliminary results and settings. In the last section, we prove our main theorem, Theorem \ref{ta}.

In Section \ref{sa}, we give some basic results on Galois cohomology required to prove the main theorem.
Using Galois cohomology, we give a combination of $a\in K,b\in K^n$ defining $M_n/K$. At the last of this section, we will give some calculations required to prove the main theorem in the last section. %

In Section \ref{sb}, we introduce filtrations $F_\bullet K,F_\bullet \Omega_{K/k}^1,F_\bullet H^1(K)$ on $K,\Omega_{K/k}^1,\;H^1(K):=H^1(K,\mathbb{F}_p)$, and the graded modules $\Gr K,\Gr \Omega_{K/k}^1,\Gr H^1(K)$ associated to these filtrations as in \cite{AS}. We investigate the relationship between the graded modules defined for $K$ and $L$, where $L/K$ is an Artin-Schreier extension. This is required since $M_n/K(\alpha)$ is a composition of $n$ Artin-Schreier extensions, where $\alpha$ is a solution of $x^p-x=a$.

In Section \ref{sc}, we give our main result, i.e., the calculation of the largest upper ramification jump $r_n$. We reduce the problem to the calculation of the conductor $m'_n$ of the Artin-Schreier extension of $K(\alpha)$ defined by $x^p-x=c_n$ for $c_n\in K(\alpha)$ defined in (\ref{ead}). Since $c_n$ does not belong to $K$, the information of $c_n$ required to calculate $r_n$ has to be written down using the elements of $K$. This is the essential part of this paper, and constitutes the second half of this section.

Take $c'_n,c''_n\in M_n$ such that
\begin{equation}
c_n-{c''_n}^p+c''_n=c'_n,\; -v_{K(\alpha)}(c'_n)=m'_n.
\end{equation}
Let
\begin{equation}
s_n=\max\left(-v_{K(\alpha)}\left(t\frac{dc_n}{dt}\right),-v_{K(\alpha)}\left(t\frac{dc''_n}{dt}\right)\right),
\end{equation}
where $t$ denotes a uniformizer of $K(\alpha)$. First, we express $s_n$ in terms of the coefficients of the defining equation in Corollary \ref{ra}(\ref{ra-2}). The main ingredient of the proof of Theorem \ref{ta} is to prove that the inequality in
\begin{equation}
m'_n=-v_{K(\alpha)}\left(t\frac{dc'_n}{dt}\right)\leq s_n
\end{equation}
is actually an equality, by calculating the image of $c'_n$ in $\Gr_{s_n}H^1(K(\alpha))$ using the results from Section \ref{sb}.

\section{Preliminaries}\label{sa}

Let $B,C$ be groups equipped with the discrete topology, $\Gamma$ a profinite group acting continuously on $B$ and $C$, and $f,g:B\to C$ group homomorphisms preserving the actions of $\Gamma$. Assume that the map $h:B\to C$ defined by $y\mapsto f(y)g(y)^{-1}$ is a surjection. Let $A$ denote the inverse image of $\{1\}$ by $h$. Then $A$ is a subgroup of $B$ with a continuous action of $\Gamma$.

We consider the following sequence:
\begin{equation}
\{1\}\to A\hookrightarrow B\overset{h}{\to} C\to\{1\}.\\
\end{equation}
This is ``exact'' in the sense that the images of the maps coincide with the inverse image of $\{1\}$ by the next map. Furthermore, the map $B\times A\to B\times_CB$ defined by $(y,x)\mapsto (y,yx)$ is a bijection. Nevertheless, this is not an exact sequence, because $h$ is not in general a group homomorphism. However, we can still consider the ``long exact sequence of cohomology'' for this ``exact'' sequence as in \cite{Se}, VII, Annex.\\

\begin{prop}\label{lk}
  \begin{enumerate}[(a)]
    \item\label{lk-1} For $y\in B$ such that $h(y)\in C^\Gamma$, the map $\zeta_y:\Gamma\to B$ defined by $\sigma\mapsto y^{-1}\sigma(y)$ is a 1-cocycle of $A$. Moreover, we can define a map $\delta:C^\Gamma\to H^1(\Gamma,A)$ as follows:
  \begin{equation}
\delta(z)=\bar{\zeta_y},
\end{equation}
where $h(y)=z\in C^\Gamma$, and $\bar{\zeta_y}$ denotes the class of $\zeta_y$ as a 1-cocycle of $A$.
\item\label{lk-3} The image of the map $\delta:C^\Gamma\to H^1(\Gamma,A)$ coincides with the inverse image of $\{1\}$ by $H^1(\Gamma,A)\to H^1(\Gamma,B)$.
\item\label{lk-4} Take $z,z'\in C^\Gamma$. Let $\Gamma_z$ and $\Gamma_{z'}$ denote the intersections of the stabilizer subgroups of $\Gamma$ with respect to the elements in $h^{-1}(\{z\})$ and $h^{-1}(\{z'\})$ respectively. Then $\Gamma_z=\Gamma_{z'}$ if and only if there exists $y\in B^{\Gamma_z\Gamma_{z'}}$ such that $z'=f(y)zg(y)^{-1}$.
\end{enumerate}
\end{prop}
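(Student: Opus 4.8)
The plan is to run the standard construction of the connecting map in the ``long exact sequence'' of non-abelian cohomology (\cite{Se}, VII, Annex), replacing the missing homomorphism property of $h$ by the bijection $B\times A\to B\times_C B$, $(y,x)\mapsto(y,yx)$, stated above. That bijection says exactly that the fibres of $h$ are the right cosets of $A$: for $y,y'\in B$ we have $h(y)=h(y')$ if and only if $y^{-1}y'\in A$. I will also use that $h$ is $\Gamma$-equivariant, which is immediate from the equivariance of $f$ and $g$, since $h(\sigma(y))=\sigma(f(y))\,\sigma(g(y))^{-1}=\sigma(h(y))$.

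For (a) and (b): if $h(y)\in C^\Gamma$ then $h(\sigma(y))=\sigma(h(y))=h(y)$ for all $\sigma\in\Gamma$, so $\sigma(y)$ and $y$ lie in the same $h$-fibre, whence $\zeta_y(\sigma)=y^{-1}\sigma(y)\in A$ by the coset description; the cocycle identity is the one-line computation $y^{-1}(\sigma\tau)(y)=\bigl(y^{-1}\sigma(y)\bigr)\,\sigma\bigl(y^{-1}\tau(y)\bigr)$. For well-definedness of $\delta$, if $h(y)=h(y')=z$ then $y'=yx$ with $x\in A$ and $\zeta_{y'}(\sigma)=x^{-1}\zeta_y(\sigma)\,\sigma(x)$, so $\zeta_y$ and $\zeta_{y'}$ are cohomologous in $A$. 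For (b), one inclusion is immediate because $\zeta_y(\sigma)=y^{-1}\sigma(y)$ is a coboundary when viewed in $B$; conversely, if a $1$-cocycle $\xi$ of $A$ becomes a coboundary in $B$, say $\xi(\sigma)=y^{-1}\sigma(y)$ with $y\in B$, then $\sigma(y)\in yA$ for all $\sigma$, hence $\sigma(h(y))=h(\sigma(y))=h(y)$, so $h(y)\in C^\Gamma$ and $\delta(h(y))$ is by construction the class of $\xi$.

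For (c) the first step is to describe $\Gamma_z$ explicitly. Fix $w_0\in h^{-1}(z)$, so $h^{-1}(z)=w_0A$; an element $\sigma\in\Gamma$ fixes every point of $w_0A$ precisely when $\sigma(w_0)=w_0$ and $\sigma$ acts trivially on $A$. Writing $\Gamma_A$ for the subgroup of $\Gamma$ acting trivially on $A$, this gives $\Gamma_z=\mathrm{Stab}_\Gamma(w_0)\cap\Gamma_A$, independently of the choice of $w_0\in h^{-1}(z)$, and likewise $\Gamma_{z'}=\mathrm{Stab}_\Gamma(w_0')\cap\Gamma_A$. The second step is the identity $f(y)\,z\,g(y)^{-1}=f(y)\,h(w_0)\,g(y)^{-1}=h(yw_0)$, which shows that the condition $z'=f(y)zg(y)^{-1}$ is equivalent to $yw_0\in h^{-1}(z')$. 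For $(\Leftarrow)$: given such a $y\in B^{\Gamma_z\Gamma_{z'}}$, set $w_0'=yw_0\in h^{-1}(z')$; every $\sigma\in\Gamma_z$ fixes $w_0$ and $y$ and lies in $\Gamma_A$, hence fixes $w_0'$, so $\sigma\in\Gamma_{z'}$, and the reverse inclusion follows symmetrically from $z=f(y^{-1})\,z'\,g(y^{-1})^{-1}$ with $y^{-1}\in B^{\Gamma_z\Gamma_{z'}}$; thus $\Gamma_z=\Gamma_{z'}$. For $(\Rightarrow)$: if $\Gamma_z=\Gamma_{z'}=:\Delta$, choose $w_0\in h^{-1}(z)$ and $w_0'\in h^{-1}(z')$ and put $y=w_0'w_0^{-1}$; since $\Delta\subseteq\mathrm{Stab}_\Gamma(w_0)\cap\mathrm{Stab}_\Gamma(w_0')$ every $\sigma\in\Delta$ fixes $w_0$ and $w_0'$, hence fixes $y$, so $y\in B^{\Delta}=B^{\Gamma_z\Gamma_{z'}}$, while $yw_0=w_0'\in h^{-1}(z')$ gives $z'=f(y)zg(y)^{-1}$.

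The only real obstacle is that $h$ is not a group homomorphism, so the usual cancellations are unavailable and everything must be routed through the bijection $B\times A\cong B\times_C B$ together with the $\Gamma$-equivariance of $h$; once those two facts are isolated each assertion is a short diagram chase, and the point needing the most care is the identification $\Gamma_z=\mathrm{Stab}_\Gamma(w_0)\cap\Gamma_A$ and the attendant translation of the twisted condition $z'=f(y)zg(y)^{-1}$ into $yw_0\in h^{-1}(z')$.
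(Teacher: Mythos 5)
Your proof is correct and follows essentially the same route as the paper: parts (a) and (b) use the identical cocycle/coboundary computations, and in (c) you produce the same elements ($y=w_0'w_0^{-1}$ in one direction, $w_0'=yw_0$ in the other) together with the identity $h(yw_0)=f(y)h(w_0)g(y)^{-1}$. The only difference is cosmetic: you package the fixing argument through the (correct) description $\Gamma_z=\operatorname{Stab}_\Gamma(w_0)\cap\Gamma_A$, where the paper instead lets $y_0$ range over all elements of the fibre $h^{-1}(\{z\})$.
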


\begin{proof}
(\ref{lk-1}) For any $y\in B$ such that $h(y)\in C^\Gamma$, we have
\begin{equation}
\{yx\in B\ |\ x\in A\}=h^{-1}(\{h(y)\})\supset \{\sigma(y)\ |\ \sigma\in G\}.
\end{equation}
Thus, for any $\sigma\in\Gamma$, we have $y^{-1}\sigma(y)\in A$. Since for any $\sigma,\tau\in\Gamma$,
\begin{equation}
y^{-1}\sigma(y)\sigma(y^{-1}\tau(y))=y^{-1}\sigma\tau(y),
\end{equation}
$\zeta_y$ is a 1-cocycle of $A$. For any $x\in A$, $\zeta_{yx}$ is cohomologous to $\zeta_y$, since
\begin{equation}
\zeta_{yx}(\sigma)=x^{-1}\zeta_y(\sigma)\sigma(x).
\end{equation}
Thus $\delta$ is well-defined.

(\ref{lk-3}) Take a 1-cocycle $s:\Gamma\to A$ of $A$.

Assume that $H^1(\Gamma,A)\to H^1(\Gamma,B)$ sends the class $\bar{s}$ of $s$ to 1. Since $A\subset B$, this implies that there exists $y\in B$ such that $s(\sigma)=y^{-1}\sigma(y)$. Since $s(\sigma)\in A$, we have
\[
h(y)=f(y)g(y)^{-1}=f(y)h(s(\sigma))g(y)^{-1}=f(y)f(y^{-1}\sigma(y))g(y^{-1}\sigma(y))^{-1}g(y)^{-1}
\]
\begin{equation}
=f(\sigma(y))g(\sigma(y))^{-1}=\sigma(f(y)g(y)^{-1})=\sigma(h(y)).
\end{equation} 
Hence $h(y)\in C^\Gamma$ and $\delta(h(y))=\bar{s}$.

Conversely, Take $z\in C^\Gamma$. Let $y$ denote an element of $B$ satisfying $h(y)=z$ and let $s=\zeta_y$. Then we have $s(\sigma)=y^{-1}\sigma(y)$ for all $\sigma\in\Gamma$ and $\delta(z)=\bar{s}$. Since this is a $B$-coboundary, we have that $H^1(\Gamma,A)\to H^1(\Gamma,B)$ sends $\bar{s}$ to 1.

(\ref{lk-4})
Assume $\Gamma_z=\Gamma_{z'}$. Then for all $\sigma\in\Gamma_z\Gamma_{z'}=\Gamma_z=\Gamma_{z'}$, $y_0\in h^{-1}(\{z\})$, and $y_0'\in h^{-1}(\{z'\})$, we have $y_0'y_0^{-1}=\sigma(y_0'y_0^{-1})$ in $B$. Therefore we have $y_0'{y_0}^{-1}\in B^{\Gamma_z\Gamma_{z'}}$ for all $y_0\in h^{-1}(\{z\})$ and $y_0'\in h^{-1}(\{z'\})$. Meanwhile, we have
\[
f(y_0'{y_0}^{-1})zg(y_0'{y_0}^{-1})^{-1}=f(y_0')f(y_0)^{-1}h(y_0)g(y_0)g(y_0')^{-1}
\]
\begin{equation}
=f(y_0')g(y_0')^{-1}=h(y_0')=z'.
\end{equation}
Therefore setting $y=y_0'{y_0}^{-1}$ for some $y_0\in h^{-1}(\{z\})$ and $y_0'\in h^{-1}(\{z'\})$ yields $z'=f(y)zg(y)^{-1}$. 

Conversely, assume that there exists $y\in B^{\Gamma_z\Gamma_{z'}}$ such that $z'=f(y)zg(y)^{-1}$. Take $y_0\in h^{-1}(\{z\})$ and let $y_0'=yy_0$. Then $y_0'\in h^{-1}(\{z'\})$, since $h(y_0')=f(y)h(y_0)g(y)^{-1}=z'$. Since $y$ is fixed by $\Gamma_z\cup\Gamma_{z'}\subset\Gamma_z\Gamma_{z'}$, we have ${y_0'}^{-1}\sigma(y_0')={y_0}^{-1}\sigma(y_0)=1$ for all $\sigma\in\Gamma_z\cup\Gamma_{z'}$. Thus, $\Gamma_z\supset\Gamma_z\cup\Gamma_{z'}$. Hence, $\Gamma_z\supset\Gamma_{z'}$. By symmetry, we also have $\Gamma_{z'}\supset\Gamma_z$. Thus, we have $\Gamma_z=\Gamma_{z'}$.
\end{proof}

\begin{prop}\label{pa}
   Let $G$ be a group. Assume that $G$ admits a descending normal series of subgroups $G=G_0\trianglerightneq G_1\trianglerightneq\cdots\trianglerightneq G_r=\{1\}$. For $0\leq i<r$, let $\pi_i$ denote the canonical projection $G_i\to G_i/G_{i+1}$. Let $f,g:G\to G$ be group homomorphisms satisfying $f^{-1}(G_i)=g^{-1}(G_i)=G_i$ for all $i$. Define a map $h:G\to G$ by $y\mapsto f(y)g(y)^{-1}$, and for all $i$, let $h'_i:G_i/G_{i+1}\to G_i/G_{i+1}$ be the morphism induced by $h$ and $\pi_i$. Then $h$ is surjective if $h'_i$ is surjective for all $i$.
\end{prop}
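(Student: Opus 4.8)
The plan is to induct on the length $r$ of the normal series. For $r\le 1$ there is nothing to do: when $r=1$ the projection $\pi_0$ is the identity and $h=h'_0$, and when $r=0$ the group $G$ is trivial. For the inductive step I would peel off the bottom of the filtration. The subgroup $G_1$ carries the shorter normal series $G_1=G_1\trianglerightneq G_2\trianglerightneq\cdots\trianglerightneq G_r=\{1\}$ of length $r-1$; intersecting with $G_1$ shows that the restrictions $f|_{G_1},g|_{G_1}\colon G_1\to G_1$ still satisfy $(f|_{G_1})^{-1}(G_i)=(g|_{G_1})^{-1}(G_i)=G_i$ for $1\le i\le r$, and the maps they induce on $G_i/G_{i+1}$ for $i\ge 1$ are exactly the given $h'_i$. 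Since $f(G_1),g(G_1)\subseteq G_1$, the map $h$ restricts to a map $h|_{G_1}\colon G_1\to G_1$, which is precisely the ``$h$'' attached to this shorter data; by the induction hypothesis it is surjective. On the other hand $h'_0$ is nothing but the map $\bar h\colon G/G_1\to G/G_1$ induced by $h$, which is surjective by assumption. So the whole statement reduces to the two-step claim: if $\bar h$ is surjective on $G/G_1$ and $h|_{G_1}$ is surjective on $G_1$, then $h$ is surjective on $G$.

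To prove this reduced claim, fix a target $z\in G$. First I would use surjectivity of $\bar h$ to choose $y_1\in G$ with $h(y_1)z^{-1}\in G_1$ (equivalently $z^{-1}h(y_1)\in G_1$, since $G_1$ is normal). Then, for any $x\in G_1$, using that $f$ and $g$ are homomorphisms and that $g(y_1)^{-1}=f(y_1)^{-1}h(y_1)$, one computes
\[
h(y_1x)=f(y_1)f(x)\,g(x)^{-1}g(y_1)^{-1}=f(y_1)\,h(x)\,f(y_1)^{-1}\cdot h(y_1),
\]
where $h(x)=f(x)g(x)^{-1}\in G_1$. As $x$ runs over $G_1$, surjectivity of $h|_{G_1}$ makes $h(x)$ run over all of $G_1$; since $G_1$ is normal in $G$, conjugation by $f(y_1)$ is a bijection of $G_1$, so $f(y_1)h(x)f(y_1)^{-1}$ also runs over all of $G_1$; hence $h(y_1x)$ runs over the whole coset $G_1\,h(y_1)=G_1z$. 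In particular $z$ is attained, which completes the induction.

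The only genuinely delicate point is that $h$ is not a group homomorphism, so one cannot simply chase the short exact sequence $1\to G_1\to G\to G/G_1\to 1$; the substitute is exactly the twisted identity $h(y_1x)=f(y_1)\,h(x)\,f(y_1)^{-1}h(y_1)$ together with the observation that conjugation by the ambient element $f(y_1)\in G$ preserves the normal subgroup $G_1$. Everything else is routine: that $h(G_i)\subseteq G_i$ (because $f(G_i),g(G_i)\subseteq G_i$ and $G_i$ is a subgroup), that $h'_i$ is well defined (because $G_{i+1}$ is normal in $G_i$), and that the hypotheses restrict correctly to $G_1$. I do not expect any trouble from passing between left and right cosets of $G_1$, since $G_1$ is normal; if desired, the computation above can be arranged so that only one kind of coset ever occurs.
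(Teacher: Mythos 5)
Your proof is correct and follows essentially the same route as the paper: induction on the length of the series, reduction to the two-step statement that surjectivity of $h|_{G_1}$ and of the induced map on $G/G_1$ imply surjectivity of $h$, and then the twisted identity $h(yx)=f(y)h(x)g(y)^{-1}$ (which you rewrite as $f(y)h(x)f(y)^{-1}h(y)$) to hit the prescribed coset of $G_1$. The paper's diagram-chase phrasing just solves $h(b)=f(c)^{-1}c'g(c)$ directly for $b\in G_1$ instead of invoking conjugation-stability of $G_1$, which is a cosmetic difference.
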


\begin{proof} We prove this proposition by induction on the length $r$ of the descending normal series of $G$. If $r=1$, then we have $G_1=\{1\}$ and $\pi$ is the identity map. Hence $h$ is surjective if $h'_0$ is surjective.

Suppose $n>1$. Since $G_1$ admits a descending normal series of length $r-1$, it suffices to show that $h$ is surjective if $h|_{G_1}$ and $h'_0$ are surjective.

Suppose $h|_{G_1}$ and $h'_0$ are surjective. Consider the following commutative diagram with exact rows.
\begin{equation}
    \xymatrix{
      \{1\}\ar[r]&G_1\ar@{^{(}->}[r]\ar[d]^{h|_{G_1}}&G\ar@{->>}[r]^{\pi}\ar[d]^{h}&G/G_1\ar[r]\ar[d]^{h'_0}&\{1\}\\
      \{1\}\ar[r]&G_1\ar@{^{(}->}[r]&G\ar@{->>}[r]^{\pi}&G/G_1\ar[r]&\{1\}
    }
    \end{equation}
We will prove that $h$ is surjective by a technique similar to that used to prove the five lemma. Note that we cannot simply apply the five lemma, because $h$ is not in general a homomorphism.

Take $c'\in G$. Since $h'_0$ and $\pi$ are surjective, there exists $c\in G$ satisfying $h'_0(\pi(c))=\pi(c')$. By the commutativity of the diagram, $\pi(f(c)g(c)^{-1})=\pi(c')$. Since $\pi$ is a homomorphism, $\pi(f(c)^{-1}c'g(c))=1$. Then by the exactness of the lower row, we have $f(c)^{-1}c'g(c)\in G_1$. Since $h|_{G_1}$ is surjective, there exists $b\in G_1$ satisfying $h(b)=f(c)^{-1}c'g(c)$. We have
\begin{equation}
h(cb)=f(c)h(b)g(c)^{-1}=f(c)f(c)^{-1}c'g(c)g(c)^{-1}=c'.
\end{equation}
Thus, $h$ is surjective.
\end{proof}

For a field $K$ of characteristic $p>0$, let $K_s$ denote a separable closure of $K$ respectively, and $G_K=\operatorname{Gal}(K_s/K)$ the absolute Galois group of $K$. Let $G$ be a unipotent group over $K$, i.e., an algebraic subgroup of the group of $n\times n$ unitriangular matrices over $K$ for some $n$. 

\begin{defi}
  We say that a unipotent group $G$ over $K$ is split if it admits a finite descending normal series of subgroups whose quotients are isomorphic to the additive group $\mathbb{G}_a$ (cf. \cite{Bo}, Definition 15.1).
\end{defi}

\begin{rema}
  By \cite{Bo}, Theorem 15.5(ii), every connected unipotent group is split if $K$ is perfect. However, this is not true if $K$ is not perfect. According to \cite{Oe}, V.3.4, if $K$ is not perfect and $t$ is an element of $K-K^p$, then the algebraic subgroup $\{(x,y)|x^p-x-ty^p=0\}$ of $\mathbb{G}_a\times\mathbb{G}_a$ is not split.
\end{rema}

\begin{prop}\label{lq}
\begin{enumerate}[(a)]
  \item\label{lq-1} ({\fontencoding{T5}\fontfamily{lmr}\selectfont Nguy\~\ecircumflex{}n}) \cite{Ng} A connected unipotent group $G$ over $K$ is split if and only if $H^1(G_L,G)=\{1\}$ for every extension $L/K$.
  \item\label{lq-2} Let $G$ be a split unipotent group over $\mathbb{F}_p$. Let $F:G\to G$ denote the morphism defined by the absolute Frobenius and let $P:G\to G$ denote the map defined by $y\mapsto F(y)y^{-1}$. Then $P$ is surjective.
 \end{enumerate}
\end{prop}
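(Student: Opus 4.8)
Part (\ref{lq-1}) is proved in \cite{Ng}, so I discuss only (\ref{lq-2}). Fix a separably closed field $\Omega$ containing $\mathbb{F}_p$ (this is the only case needed in the sequel, for instance $\Omega=K_s$); the plan is to prove that $P\colon G(\Omega)\to G(\Omega)$ is surjective by invoking Proposition \ref{pa}. Since $G$ is split over $\mathbb{F}_p$, by definition there is a descending normal series
\[
G=G_0\triangleright G_1\triangleright\cdots\triangleright G_r=\{1\}
\]
of algebraic subgroups defined over $\mathbb{F}_p$ with $G_i/G_{i+1}\cong\mathbb{G}_a$ over $\mathbb{F}_p$ for all $i$. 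Taking $\Omega$-points gives a descending normal series of abstract groups $G(\Omega)=G_0(\Omega)\triangleright\cdots\triangleright G_r(\Omega)=\{1\}$, to which I would apply Proposition \ref{pa} with $f=F$ and $g=\mathrm{id}$, so that the map there called $h$ is exactly $P$.

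First I would verify the hypotheses of Proposition \ref{pa}. The Frobenius $F$ is a group homomorphism by functoriality of the absolute Frobenius for group schemes over $\mathbb{F}_p$, and $\mathrm{id}$ is one trivially; the condition $\mathrm{id}^{-1}(G_i(\Omega))=G_i(\Omega)$ is immediate. For $F$, the key point is that $G_i$ is cut out inside an affine space, with coordinates $t_1,\ldots,t_N$, by polynomials $Q$ having coefficients in $\mathbb{F}_p$, and for any such $Q$ one has the identity $Q(t_1^p,\ldots,t_N^p)=Q(t_1,\ldots,t_N)^p$ (because $c^p=c$ for $c\in\mathbb{F}_p$). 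Since $F$ acts on $\Omega$-points by raising the coordinates to the $p$-th power, this yields $Q(F(y))=Q(y)^p$ for $y\in\Omega^N$, so $Q(F(y))=0$ if and only if $Q(y)=0$; hence $F$ maps $G_i(\Omega)$ into itself and $F^{-1}(G_i(\Omega))=G_i(\Omega)$.

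Next I would identify the maps $h'_i$ induced on the graded pieces. Since $\Omega$ is separably closed, $H^1(\Omega,G_{i+1})=\{1\}$, so $G_i(\Omega)/G_{i+1}(\Omega)\cong(G_i/G_{i+1})(\Omega)=\mathbb{G}_a(\Omega)=\Omega$; through this identification $F$ induces the absolute Frobenius $z\mapsto z^p$ of $\mathbb{G}_a$ (the isomorphism $G_i/G_{i+1}\cong\mathbb{G}_a$ being defined over $\mathbb{F}_p$) while $\mathrm{id}$ induces the identity, so $h'_i$ becomes the Artin-Schreier map $\Omega\to\Omega$, $z\mapsto z^p-z$. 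This is surjective: for $a\in\Omega$ the polynomial $X^p-X-a$ is separable, hence splits in the separably closed field $\Omega$. Therefore all the $h'_i$ are surjective and Proposition \ref{pa} yields that $P$ is surjective, as claimed.

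The step I expect to require the most care is the verification that $F^{-1}(G_i)=G_i$, i.e.\ that the Frobenius respects the chosen series exactly; this is precisely where one uses that the $G_i$ are defined over $\mathbb{F}_p$ and not merely over $\Omega$. One must also ensure that $G_i(\Omega)/G_{i+1}(\Omega)$ is the full additive group $\Omega$, and not a proper subgroup, so that $h'_i$ is genuinely the surjective Artin-Schreier map; this uses the vanishing of $H^1(\Omega,G_{i+1})$, which holds because $\Omega$ is separably closed. (For $\Omega=\overline{\mathbb{F}}_p$ one could instead deduce the surjectivity of $P$ from (\ref{lq-1}) with $L=\mathbb{F}_p$, identifying $H^1(\widehat{\mathbb{Z}},G(\overline{\mathbb{F}}_p))$ with the set of $F$-twisted conjugacy classes; but the argument above applies over any separably closed $\Omega$.)
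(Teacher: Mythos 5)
Your proposal is correct and follows essentially the same route as the paper, which also deduces the surjectivity of $P$ from Proposition \ref{pa} together with the surjectivity of the Artin--Schreier map $x\mapsto x^p-x$ on the separably closed field; the paper states this in one line, and you have merely filled in the routine verifications (that $F$ preserves the $\mathbb{F}_p$-rational series with $F^{-1}(G_i)=G_i$, and that each graded map is the Artin--Schreier map on $\mathbb{G}_a(\Omega)$ via vanishing of $H^1$ over a separably closed field). No changes are needed.
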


\begin{proof}
  (\ref{lq-1}) See \cite{Ng}.

(\ref{lq-2}) Since $K_s\to K_s$ defined by $x\mapsto x^p-x$ is surjective, by Proposition \ref{pa}, $P$ is surjective.
\end{proof}
For any positive integer $m$, let us denote the group of $m\times m$ matrices (resp. invertible matrices) over some field of characteristic $p$ by $M(m)$ (resp. $GL(m)$). 
\begin{defi}\label{dfb}
  Fix an integer $2\leq n\leq p$.
  \begin{enumerate}[(a)]
    \item\label{dfb-1} Let $A$ be the nilpotent matrix of size $n\times n$ defined by
\begin{equation}
A=\left(\delta_{i,j-1}\right)_{ij}\in M(n),
\end{equation}
where $\delta$ denotes the Kronecker delta. Let $R(t,x)$ be the $(p-1)$-st Maclaurin polynomial of $(1+t)^x$ with respect to $t$, i.e.,
\begin{equation}
R(t,x)=\sum_{i=0}^{p-1}\binom{x}{i}t^i\in\mathbb{F}_p[t,x].
\end{equation}
Define a morphism $\mathbf{A}:\mathbb{G}_a\to GL(n)$ of algebraic groups by $\mathbf{A}(x)=R(A,x)\in GL(n)$.
\item\label{dfb-2}
Let $G(n)\subset GL(n+1)$ be the unipotent algebraic subgroup of dimension $n+1$ over $\mathbb{F}_p$, defined by
\begin{equation}
G(n)=\left\{
\begin{pmatrix}
  \mathbf{A}(x)&y\\
  0&1
\end{pmatrix}\;
\middle|\;x\in \mathbb{G}_a,\;y\in {\mathbb{G}_a}^n\right\}.
\end{equation}
Let
\begin{equation}
G(n)\supsetneq Z_1G(n)\supsetneq\cdots\supsetneq Z_nG(n)=\{1\}
\end{equation}
be the descending central series of $G(n)$.
\end{enumerate}
\end{defi}

\begin{rema}
We have for all $1\leq j\leq n$, 
\begin{equation}
Z_jG(n)=\left\{
  \begin{pmatrix}
  \mathbf{A}(x)&y\\
  0&1
  \end{pmatrix}\in G(n)\;\middle|
  \;y=\begin{pmatrix}
  y_n\\
  \vdots\\
  y_1
  \end{pmatrix},\;
  x=y_1=\cdots=y_j=0
\right\}.
\end{equation}
Moreover, $G(n)$ is split since $\mathbb{F}_p$ is perfect.
\end{rema}

\begin{defi}\label{dfd}
  Fix an integer $2\leq n\leq p$. Let $K$ be a field. Let $M_n/K$ denote a Galois extension whose Galois group is isomorphic to $G(n,\mathbb{F}_p)$. Let $K\subset M_1\subset\cdots\subset M_n$ be the Galois subextensions of $M_n/K$ corresponding to the descending central series of $G(n,\mathbb{F}_p)$.
  \end{defi}

  For any Galois extension $E/F$, we denote the Galois group $\Gal(E/F)$ by $G_{E/F}$. 

Assume that $K$ is of characteristic $p>0$. We will apply Proposition \ref{lk} to $B=C=G(n,K_s),\;\Gamma=G_K,\;f=F,\;g=\operatorname{id}_{G(n,K_s)}$, where $G_K$ denotes the absolute Galois group over $K$, and $F:G(n,K_s)\to G(n,K_s)$ denotes the Frobenius map.  By Proposition \ref{lq}(\ref{lq-2}), the map $h:G(n,K_s)\to G(n,K_s)$ defined by $y\mapsto F(y)y^{-1}$ is a surjection. Thus we can apply Proposition \ref{lk}.

We have $A=h^{-1}(\{1\})=G(n,\mathbb{F}_p)$. Since $G_K$ acts on $G(n,\mathbb{F}_p)$ trivially, we can identify $H^1(G_K,G(n,\mathbb{F}_p))$ with the set of conjugacy classes of $\operatorname{Hom}(G_K,G(n,\mathbb{F}_p))$ by $G(n,\mathbb{F}_p)$. 
  
\begin{lemm}\label{lag}
  \begin{enumerate}[(a)]
    \item\label{lag-1} There exist $a\in K,\;b\in K^{n}$ such that the extension $M_n/K$ is defined by 
\begin{equation}\label{ea}
\begin{pmatrix}
  \mathbf{A}(x^p)&F(y)\\
  0&1
\end{pmatrix}
=
\begin{pmatrix}
  \mathbf{A}(a)&b\\
  0&1
\end{pmatrix}
\begin{pmatrix}
  \mathbf{A}(x)&y\\
  0&1
\end{pmatrix},
\end{equation}
where $F:{\mathbb{G}_a}^n\to{\mathbb{G}_a}^n$ denotes the component-wise Frobenius map.
\item\label{lag-2} Take
  \begin{equation}
  \alpha\in K_s,\;\gamma=\begin{pmatrix}
    \gamma_n\\
    \vdots\\
    \gamma_1
\end{pmatrix}\in {K_s}^n
  \end{equation}
  such that
\begin{equation}
\alpha^p-\alpha=a,\;F(\gamma)-\gamma=\mathbf{A}(-\alpha^p)b.
    \end{equation}
    Then for all $1\leq j\leq n$, we have $M_j=K(\alpha,\gamma_1,\ldots,\gamma_j)$.
    \end{enumerate}
  \end{lemm}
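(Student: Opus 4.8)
The plan is to extract both parts from the ``long exact sequence'' of Proposition~\ref{lk}, applied exactly as in the discussion preceding the lemma ($B=C=G(K_s)$, $\Gamma=G_K$, $f=F$, $g=\operatorname{id}$, so $A=G(\mathbb{F}_p)$ and $h(y)=F(y)y^{-1}$), together with the vanishing criterion of Proposition~\ref{lq}(\ref{lq-1}). Throughout I use the identification of $H^1(G_K,G(\mathbb{F}_p))$ (trivial action) with conjugacy classes of continuous homomorphisms $G_K\to G(\mathbb{F}_p)$, and I write $z_{a,b}:=\left(\begin{smallmatrix}\mathbf{A}(a)&b\\0&1\end{smallmatrix}\right)$, so that $G(K)=\{z_{a,b}\mid a\in K,\ b\in K^n\}$.

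For part~(\ref{lag-1}): the given extension $M_n/K$ corresponds, via Definition~\ref{dfd}, to the class $[\rho_0]\in H^1(G_K,G(\mathbb{F}_p))$ of a surjection $\rho_0\colon G_K\twoheadrightarrow\operatorname{Gal}(M_n/K)\cong G(\mathbb{F}_p)$. First I would show the connecting map $\delta\colon C^\Gamma=G(K)\to H^1(G_K,G(\mathbb{F}_p))$ is surjective: by Proposition~\ref{lk}(\ref{lk-3}) its image is the kernel of $H^1(G_K,G(\mathbb{F}_p))\to H^1(G_K,G(K_s))$, and $G$ is split unipotent over $\mathbb{F}_p$, hence over $K$ (base change the defining normal series, whose quotients $\mathbb{G}_a$ stay $\mathbb{G}_a$), so $H^1(G_K,G(K_s))=\{1\}$ by Proposition~\ref{lq}(\ref{lq-1}). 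Pick $z_{a,b}\in G(K)$ with $\delta(z_{a,b})=[\rho_0]$, and (using surjectivity of $h$) an element $w=\left(\begin{smallmatrix}\mathbf{A}(x)&y\\0&1\end{smallmatrix}\right)\in G(K_s)$ with $h(w)=z_{a,b}$; writing out $F(w)w^{-1}=z_{a,b}$ is precisely equation~\eqref{ea}. By Proposition~\ref{lk}(\ref{lk-1}), $\delta(z_{a,b})$ is the class of $\sigma\mapsto w^{-1}\sigma(w)$, so this homomorphism is conjugate to $\rho_0$, has kernel $\operatorname{Gal}(K_s/M_n)$, and its fixed field $K(x,y)$ equals $M_n$; thus \eqref{ea} defines $M_n/K$.

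For part~(\ref{lag-2}): given $\alpha,\gamma$ as in the statement, set $y:=\mathbf{A}(\alpha)\gamma$ and $w:=\left(\begin{smallmatrix}\mathbf{A}(\alpha)&y\\0&1\end{smallmatrix}\right)\in G(K_s)$. Using that $\mathbf{A}\colon\mathbb{G}_a\to GL(n)$ is a homomorphism, that $\alpha^p-\alpha=a$ (so $\mathbf{A}(\alpha^p)=\mathbf{A}(a)\mathbf{A}(\alpha)$), and that $F(y)=\mathbf{A}(\alpha^p)F(\gamma)=\mathbf{A}(\alpha^p)\gamma+b=\mathbf{A}(a)y+b$ (from $F(\gamma)-\gamma=\mathbf{A}(-\alpha^p)b$), a short computation shows $w$ satisfies \eqref{ea}, i.e. $h(w)=z_{a,b}\in G(K)$. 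Hence $\rho(\sigma):=w^{-1}\sigma(w)$ is a homomorphism $G_K\to G(\mathbb{F}_p)$ representing $\delta(z_{a,b})$, so it is conjugate to the $\rho_0$ of part~(\ref{lag-1}); in particular $\rho$ is surjective, $\ker\rho=\operatorname{Gal}(K_s/M_n)$, and — since each $Z_jG$ is normal in $G$ — $\rho^{-1}(Z_jG(\mathbb{F}_p))=\rho_0^{-1}(Z_jG(\mathbb{F}_p))=\operatorname{Gal}(K_s/M_j)$ by Definition~\ref{dfd}. Finally, since $\alpha^p-\alpha=a\in K$ we have $t_\sigma:=\sigma(\alpha)-\alpha\in\mathbb{F}_p$, and a direct matrix computation gives
\[
\rho(\sigma)=\begin{pmatrix}\mathbf{A}(t_\sigma)&\mathbf{A}(t_\sigma)\sigma(\gamma)-\gamma\\0&1\end{pmatrix};
\]
by the description of $Z_jG$ in the Remark after Definition~\ref{dfb}, $\rho(\sigma)\in Z_jG(\mathbb{F}_p)$ iff $t_\sigma=0$ and $\sigma(\gamma_i)=\gamma_i$ for $1\le i\le j$, i.e. iff $\sigma$ fixes $K(\alpha,\gamma_1,\ldots,\gamma_j)$. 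Therefore $\operatorname{Gal}(K_s/M_j)=\operatorname{Gal}(K_s/K(\alpha,\gamma_1,\ldots,\gamma_j))$ and $M_j=K(\alpha,\gamma_1,\ldots,\gamma_j)$.

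The conceptual heart is the surjectivity of $\delta$, whose real content is Proposition~\ref{lq}(\ref{lq-1}). The step I expect to need the most care is the bookkeeping that turns a cocycle representative $\sigma\mapsto w^{-1}\sigma(w)$ back into an honest surjection onto $G(\mathbb{F}_p)$ with the expected kernel — so that $K(x,y)$, resp.\ the fixed field of $\rho^{-1}(Z_jG)$, really is $M_n$, resp.\ $M_j$ — and, in part~(\ref{lag-2}), keeping straight the opposite orderings of the coordinates $\gamma_n,\ldots,\gamma_1$ of $\gamma$ and of the filtration $Z_\bullet G$, so that passing to $Z_jG$ annihilates $\gamma_1,\ldots,\gamma_j$ rather than $\gamma_{n-j+1},\ldots,\gamma_n$.
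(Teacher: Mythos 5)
Your proof is correct, and for part~(\ref{lag-1}) it is essentially identical to the paper's: surjectivity of $\delta$ from Proposition~\ref{lk}(\ref{lk-3}) plus the splitness criterion Proposition~\ref{lq}(\ref{lq-1}), then choosing $z_{a,b}$ representing the class of $\phi\circ\pi$. For part~(\ref{lag-2}) you take a slightly different route: the paper invokes Proposition~\ref{lk}(\ref{lk-4}) over $K(\alpha)$ to replace the defining element $\left(\begin{smallmatrix}\mathbf{A}(a)&b\\0&1\end{smallmatrix}\right)$ by $\left(\begin{smallmatrix}\mathbf{A}(0)&\mathbf{A}(-\alpha^p)b\\0&1\end{smallmatrix}\right)$, so that the equation becomes component-wise Artin--Schreier and $M_n=K(\alpha,\gamma_1,\ldots,\gamma_n)$ falls out, with the identification of the intermediate $M_j$ asserted from how $G_{M_n/M_j}$ acts on $\alpha,\gamma_1,\ldots,\gamma_j$; you instead bypass (\ref{lk-4}) by exhibiting the explicit solution $w=\left(\begin{smallmatrix}\mathbf{A}(\alpha)&\mathbf{A}(\alpha)\gamma\\0&1\end{smallmatrix}\right)$ of \eqref{ea} (which is exactly the change of variables underlying the paper's twist), computing the homomorphism $\rho(\sigma)=w^{-1}\sigma(w)$ in closed form, and using normality of $Z_jG$ together with its explicit description to identify $\rho^{-1}(Z_jG(\mathbb{F}_p))$ with the stabilizer of $K(\alpha,\gamma_1,\ldots,\gamma_j)$. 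The paper's argument is shorter because Proposition~\ref{lk}(\ref{lk-4}) is already available; yours costs a little more matrix bookkeeping but makes fully explicit the step the paper only asserts (how $G_{M_n/M_j}$ acts on the generators), and the normality remark is precisely the point needed to make the conjugacy ambiguity in $\delta$ harmless. Both are complete proofs.
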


  \begin{proof}
(\ref{lag-1}) Let $\pi:G_K\to G_{M_n/K}$ denote the canonical projection. Let $\phi:G_{M_n/K}\to G(n,\mathbb{F}_p)$ be any isomorphism. Define $\delta:G(n,K)\to H^1(G_K,G(n,\mathbb{F}_p))$ as in Proposition \ref{lk}. By Proposition \ref{lk}(\ref{lk-3}) and Proposition \ref{lq}(\ref{lq-1}), $\delta$ is a surjection. Thus, there exists
  \begin{equation}
T=\begin{pmatrix}
  \mathbf{A}(a)&b\\
  0&1
\end{pmatrix}
\in G(n,K)
\end{equation}
such that $\delta(T)$ equals the conjugacy class of $\phi\circ\pi$. Then by definition of $\delta$ in Proposition \ref{lk}(\ref{lk-1}), $M_n/K$ is defined by (\ref{ea}).

(\ref{lag-2}) By applying Proposition \ref{lk}(\ref{lk-4}) to $K(\alpha)$, we may replace $T$ with
  \begin{equation}
\begin{pmatrix}
  \mathbf{A}(-\alpha^p)&0\\
  0&1
\end{pmatrix}
\begin{pmatrix}
  \mathbf{A}(a)&b\\
  0&1
\end{pmatrix}
\begin{pmatrix}
  \mathbf{A}(-\alpha)&0\\
  0&1
\end{pmatrix}^{-1}
=\begin{pmatrix}
  \mathbf{A}(0)&\mathbf{A}(-\alpha^p)b\\
  0&1
  \end{pmatrix}
\end{equation}
Thus $M_n=K(\alpha,\gamma_1,\ldots,\gamma_n)$. Note that $G_{M_n/M_j}$ acts trivially on $\alpha,\gamma_1,\ldots,\gamma_j$, but not trivially on $\gamma_{j+1},\ldots,\gamma_n$. Therefore, $M_j=K(\alpha,\gamma_1,\ldots,\gamma_j)$.
  \end{proof}

  \begin{lemm}\label{lah}
  Let $l_1,\ldots,l_n$ and $\lambda_1,\ldots,\lambda_n$ be sequences of integers and $\lambda$ an integer. Assume that for all $2\leq i\leq n$, they satisfy the following conditions:
  \begin{enumerate}[(i)]
  \item\label{lah-2} If $l_i<-(p-1)\lambda+p\lambda_i$, then we have $\lambda_{i-1}=-\lambda+\lambda_i$.
  \item\label{lah-3} We have $l_i\leq -(n-i)\lambda+\lambda_n$.
  \item\label{lah-4} We have $\lambda<\lambda_i$.
  \end{enumerate}

  Then we have
  \begin{equation}\label{eg}
    l_i<-(p-1)\lambda+p\lambda_i,
  \end{equation}
   and
  \begin{equation}\label{eh}
    \lambda_i=(i-1)\lambda+\lambda_1
  \end{equation}
  for all $2\leq i\leq n$.
\end{lemm}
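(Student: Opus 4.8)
The plan is to establish both conclusions simultaneously by \emph{downward} induction on $i$, running from $i=n$ down to $i=2$, and to carry along the auxiliary equality $\lambda_i=\lambda_n-(n-i)\lambda$ as part of the induction hypothesis. The reason for going top-down is that condition (\ref{lah-3}) becomes genuinely useful only once this rigid formula for $\lambda_i$ is available, and the formula propagates downward from $i=n$ via (\ref{lah-2}).

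Two preliminary remarks organize the argument. First, the target (\ref{eh}) for $2\le i\le n$ is, by telescoping, equivalent to the family of relations $\lambda_i-\lambda_{i-1}=\lambda$; and condition (\ref{lah-2}) produces exactly such a relation, but only after its hypothesis $l_i<-(p-1)\lambda+p\lambda_i$ is known. So the real content is (\ref{eg}); granting it, (\ref{lah-2}) hands us (\ref{eh}) for free. Second, an elementary inequality: since $p\ge 2$, any integer $m$ with $m>\lambda$ satisfies $(p-1)(m-\lambda)\ge p-1\ge 1>0$, i.e. $m<-(p-1)\lambda+pm$. We will apply this with $m=\lambda_i$, which is legitimate by (\ref{lah-4}).

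Now the induction itself. For the base case $i=n$, condition (\ref{lah-3}) gives $l_n\le -(n-n)\lambda+\lambda_n=\lambda_n$, and the elementary inequality gives $\lambda_n<-(p-1)\lambda+p\lambda_n$; chaining these yields (\ref{eg}) at $i=n$, whence (\ref{lah-2}) at $i=n$ gives $\lambda_{n-1}=-\lambda+\lambda_n$, while the auxiliary equality at $i=n$ is trivial. For the step, suppose (\ref{eg}), the relation $\lambda_{i}=\lambda_{i+1}-\lambda$ coming from (\ref{lah-2}) at $i+1$, and the auxiliary equality $\lambda_{i+1}=\lambda_n-(n-i-1)\lambda$ all hold for index $i+1$ (where $2\le i\le n-1$). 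Combining the last two gives $\lambda_i=\lambda_n-(n-i)\lambda$. Then (\ref{lah-3}) at $i$ reads $l_i\le -(n-i)\lambda+\lambda_n=\lambda_i$, the elementary inequality gives $\lambda_i<-(p-1)\lambda+p\lambda_i$, so (\ref{eg}) holds at $i$, and (\ref{lah-2}) at $i$ then gives $\lambda_{i-1}=-\lambda+\lambda_i$, completing the step.

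Collecting the output: (\ref{eg}) holds for every $2\le i\le n$, and the relations $\lambda_{i-1}=\lambda_i-\lambda$ for $2\le i\le n$ telescope to $\lambda_i=(i-1)\lambda+\lambda_1$, which is (\ref{eh}). I do not expect a serious obstacle here; everything reduces to a clean induction once two things are seen: that one should run the recursion from $i=n$ downward (so that $\lambda_i=\lambda_n-(n-i)\lambda$ is in hand exactly when (\ref{lah-3}) is invoked), and that (\ref{lah-3}) then collapses to the single clean bound $l_i\le\lambda_i$, after which (\ref{lah-4}) does the rest.
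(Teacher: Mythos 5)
Your proposal is correct and is essentially the paper's own argument: the paper proves the same chain $l_j\le\lambda_j<-(p-1)\lambda+p\lambda_j$ at the top index, extracts $\lambda_{j-1}=-\lambda+\lambda_j$ from condition (i), and then applies the lemma inductively to the truncated sequence of length $n-1$, which is exactly your downward induction on $i$ with the invariant $\lambda_i=\lambda_n-(n-i)\lambda$ carried explicitly instead of re-verifying hypothesis (ii) for the shorter sequence. No gap.
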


\begin{proof}
  We prove this lemma by induction on $n$. It is clear that the lemma holds for $n=1$.

  Suppose $n=j$. Assume that the lemma holds for $n=j-1$.
  
  By (\ref{lah-3}) for $i=j$, we have
  \begin{equation}
  l_j\leq \lambda_j.
  \end{equation}
  By (\ref{lah-4}) for $i=j$, we have
  \begin{equation}
  \lambda_j<-(p-1)\lambda+p\lambda_j.
  \end{equation}
  Thus we get (\ref{eg}) for $i=j$.
  By (\ref{lah-2}) for $i=j$, we have
  \begin{equation}\label{en}
  -(j-1-i)\lambda+\lambda_{j-1}=-(j-i)\lambda+\lambda_j.
  \end{equation}

  The integer $\lambda$ and the sequences $l_1,\ldots,l_{j-1}$ and $\lambda_1,\ldots,\lambda_{j-1}$ clearly satisfy conditions (\ref{lah-2}) and (\ref{lah-4}) of the lemma for $n=j-1$. We will show that they also satisfy condition (\ref{lah-3}).

  By (\ref{en}) and (\ref{lah-3}) for $2\leq i\leq j-1$, we have
  \begin{equation}
  l_i\leq-(j-1-i)\lambda+\lambda_{j-1}
  \end{equation}
  for all $2\leq j\leq i-1$. Therefore, by induction hypothesis, we have (\ref{eg}) and (\ref{eh}) for all $2\leq j\leq i-1$. By (\ref{eh}) for $i=j-1$ and (\ref{en}), we have (\ref{eh}) for $i=j$.

  Hence, the lemma also holds for $n=j$.
\end{proof}

\begin{lemm}\label{laf}
  We have an equality
  \begin{equation}
  \sum_{i=1}^j\frac{(-1)^{i-1}y}{(j-i)!(i-1)!(x+(i-1)y)}=\prod_{i=1}^{j}\frac{y}{x+(i-1)y}
  \end{equation}
  in $\displaystyle{\mathbb{Z}\left[\frac{1}{(j-1)!},x,y,\frac{1}{\prod_{i=1}^{j}x+(i-1)y}\right]}$.
  \end{lemm}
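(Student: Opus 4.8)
The identity is a disguised Lagrange interpolation formula, so the plan is to reduce it to a polynomial identity and then pin down a polynomial of small $x$-degree by exhibiting enough roots. First I would set $d=\prod_{i=1}^{j}(x+(i-1)y)$ and work in the ring $R:=\mathbb{Z}\bigl[\tfrac{1}{(j-1)!},x,y,d^{-1}\bigr]$. Since $R$ is an integral domain embedding into $\mathbb{Q}(x,y)$ and $d$ is invertible in $R$, each factor $x+(i-1)y$ is invertible in $R$ with $\frac{1}{x+(i-1)y}=d^{-1}\prod_{k\neq i}(x+(k-1)y)$, so the left-hand side of the lemma is a legitimate element of $R$. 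Multiplying the asserted identity by $d$ then reduces it to the polynomial identity
\begin{equation}
\sum_{i=1}^{j}\frac{(-1)^{i-1}y}{(j-i)!\,(i-1)!}\prod_{\substack{1\le k\le j\\ k\ne i}}(x+(k-1)y)=y^{j},
\end{equation}
and since all coefficients lie in $\mathbb{Z}[\tfrac1{(j-1)!}]\subset\mathbb{Q}$, it suffices to verify this in $\mathbb{Q}[x,y]$.

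Next I would let $P(x)\in\mathbb{Q}[y][x]$ be the difference of the two sides. Each product $\prod_{k\ne i}(x+(k-1)y)$ has $x$-degree $j-1$, so $\deg_xP\le j-1$. For a fixed $m\in\{1,\dots,j\}$ I would substitute $x=-(m-1)y$: every summand with $i\ne m$ contains the factor $x+(m-1)y$ and so vanishes, leaving
\begin{equation}
P\bigl(-(m-1)y\bigr)=\frac{(-1)^{m-1}y}{(j-m)!\,(m-1)!}\,y^{\,j-1}\!\!\prod_{\substack{1\le k\le j\\ k\ne m}}\!(k-m)\;-\;y^{j}.
\end{equation}
Splitting $\prod_{k\ne m}(k-m)$ into the ranges $k<m$ and $k>m$ gives $(-1)^{m-1}(m-1)!\,(j-m)!$, whence the bracketed product collapses and $P(-(m-1)y)=0$ as an element of $\mathbb{Q}[y]$.

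Finally, in the UFD $\mathbb{Q}[x,y]$ the linear forms $x+(m-1)y$ for $m=1,\dots,j$ are pairwise non-associate primes, and each divides $P$ (divide $P$ by the $x$-monic polynomial $x+(m-1)y$; the remainder lies in $\mathbb{Q}[y]$ and equals $P(-(m-1)y)=0$). Hence their product, of $x$-degree $j$, divides $P$, which has $x$-degree at most $j-1$; therefore $P=0$, and reversing the reduction above yields the identity in $R$. I do not expect a serious obstacle here: the only points needing care are the ring-theoretic bookkeeping — that $\frac{1}{x+(i-1)y}$ genuinely lies in $R$ and that checking in $\mathbb{Q}[x,y]$ is enough — and the sign computation of $\prod_{k\ne m}(k-m)$; the rest is the standard interpolation argument.
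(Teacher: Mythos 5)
Your proof is correct and is essentially the paper's own argument: the paper likewise clears denominators, evaluates the resulting Lagrange-type sum at the $j$ points $x=-(m-1)y$ (after normalizing $y=1$ by homogeneity), and concludes by a degree count that the difference vanishes. Your only deviation is bookkeeping — keeping both variables and invoking coprimality of the linear forms in $\mathbb{Q}[x,y]$ instead of setting $y=1$ — which does not change the substance.
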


\begin{proof}
  We may assume $y=1$. Then it follows from the fact that the polynomial of degree $j-1$,
  \begin{equation}
  f(x)=\sum_{i=1}^j\left(\frac{(-1)^{i-1}}{(j-i)!(i-1)!}\prod_{\substack{1\leq i'\leq j\\i'\neq i }}(x+(i'-1))\right)
  \end{equation}
   satisfies $f(0)=f(-1)=\cdots=f(-(j-1))=1$. 
\end{proof}

\section{Filtrations}\label{sb}

Let $K$ be a complete discrete valuation field of characteristic $p>0$ with perfect residue field $k$ and $K_s$ a separable closure of $K$. Define filtrations $F_\bullet K$ of $K$ as $F_nK={\mathfrak{m}}^{-n}$. 

Let $P:K_s\to K_s$ be the surjective map defined by $P(x)=x^p-x$. Then by Proposition \ref{lk}, Proposition \ref{pa} and Proposition \ref{lq}(\ref{lq-1}), we can identify the cokernel of $P|_K:K\to K$ with $H^1(K):=H^1(G_K,\mathbb{F}_p)=\operatorname{Hom}(G_K,\mathbb{F}_p)$ by the following isomorphism:
\begin{equation}
\begin{array}{ccc}
  \operatorname{Coker}P|_K&\to&\operatorname{Hom}(G_K,\mathbb{F}_p)\\
  x&\mapsto&(\sigma\mapsto \sigma(y)-y) 
\end{array}
\end{equation}
where $y\in K_s$ satisfies $P(y)=x$. 

Consider the map $K\to H^1(K)$ defined by the projection $K\to K/P(K)=\operatorname{Coker}P|_K$. Let $F_nH^1(K)$ be the image of $F_n(K)$ by this map in $H^1(K):=H^1(K,\mathbb{F}_p)$.

Let $\Gr_nK=F_nK/F_{n-1}K,\Gr_nH^1(K)=F_nH^1(K)/F_{n-1}H^1(K)$ denote the graded quotients, and define the graded algebra $\Gr K:=\oplus_{n\in\mathbb{Z}}\Gr_nK$. The graded algebra $\Gr K$ is isomorphic to $k[t,t^{-1}]$.

The space $\Omega_{K/k}^1$ of K\"ahler differentials is a 1-dimensional $K$-vector space, and its submodule $\Omega_{\mathcal{O}_K/k}^1$ is a free $\mathcal{O}_K$-module of rank 1. Let $d:K\to\Omega_{K/k}^1$ denote the canonical derivation. Let $F_n\Omega_{K/k}^1=\mathfrak{m}^{-n-1}\Omega_{\mathcal{O}_K/k}^1$ and define the graded quotient $\Gr_n\Omega_{K/k}^1$ and the graded module $\Gr\Omega_{K/k}^1$ as above. Note that this graded module is a $\Gr K$ module. For $\chi\in \Omega_{K/k}^1$, let  $v_K(\chi)$ denote the smallest integer $n$ such that $\chi\in F_{-n}\Omega_{K/k}^1$.\\

\begin{lemm}\label{lr}
\begin{enumerate}[(a)]
 \item\label{lr-1} Let $t$ denote a uniformizer of $K$. The multiplication $K\to\Omega_{K/k}^1$ by $t^{-1}dt$ induces an isomorphism $\mu_{K}:\Gr_nK\to\Gr_n\Omega_{K/k}^1$. This isomorphism does not depend on the uniformizer $t$.
 \item\label{lr-2} The derivation $d$ induces a morphism $\partial:\Gr_nK\to\Gr_n\Omega_{K/k}^1$. We have $\partial=-n\cdot\mu_{K}$.
\end{enumerate}
\end{lemm}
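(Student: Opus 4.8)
The plan is to work with an explicit uniformizer and reduce everything to a computation in $k[t,t^{-1}]$, which is the associated graded ring $\Gr K$. Fix a uniformizer $t$ of $K$; then $\Omega^1_{\mathcal{O}_K}$ is free of rank one generated by $dt$, so $\Omega^1_{\mathcal{O}_K} = \mathcal{O}_K\, t^{-1}dt$ as well (since $t$ is a unit times $t$), and $F_n\Omega^1_K = \mathfrak{m}_K^{-n-1}\Omega^1_{\mathcal{O}_K} = \mathfrak{m}_K^{-n}\cdot t^{-1}dt$. For part (\ref{lr-1}), I would check directly that multiplication by $t^{-1}dt$ carries $F_nK = \mathfrak{m}_K^{-n}$ onto $F_n\Omega^1_K$ and $F_{n-1}K$ onto $F_{n-1}\Omega^1_K$, hence induces an isomorphism $\mu\colon \Gr_nK\to\Gr_n\Omega^1_K$ on graded quotients (it is surjective by the identification above, and an isomorphism because both sides are $1$-dimensional $k$-vector spaces, or because multiplication by the unit $t^{-1}dt$ is injective). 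For independence of the uniformizer: if $t' = ut$ with $u\in\mathcal{O}_K^\times$, then $t'^{-1}dt' = t^{-1}dt + u^{-1}du$, and $u^{-1}du \in \Omega^1_{\mathcal{O}_K} = F_{-1}\Omega^1_K$; multiplying an element of $\mathfrak{m}_K^{-n}$ by something in $F_{-1}\Omega^1_K$ lands in $F_{n-1}\Omega^1_K$, so the two multiplication maps agree modulo $F_{n-1}\Omega^1_K$, i.e.\ they induce the same map on $\Gr_n$.

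For part (\ref{lr-2}), the point is that $d$ preserves the filtration in the appropriate sense: for $x\in F_nK = \mathfrak{m}_K^{-n}$, write $x = \sum_{i\geq -n} a_i t^i$ with $a_i\in k$ (using a Teichm\"uller-type or coefficient-field lift — here $k$ is perfect and $K$ is equal characteristic, so $K \cong k((t))$), so that $dx = \sum_{i\geq -n} i\, a_i t^{i-1}dt = \sum_{i\geq -n} i\, a_i t^i\cdot t^{-1}dt$. This shows $dx\in F_n\Omega^1_K$, so $d$ induces $\partial\colon \Gr_nK\to\Gr_n\Omega^1_K$. Now on the graded quotient only the top-degree term survives: the class of $x$ in $\Gr_nK$ is $a_{-n}t^{-n}$, the class of $dx$ in $\Gr_n\Omega^1_K$ is $(-n)a_{-n}t^{-n}\cdot t^{-1}dt = -n\,\mu(a_{-n}t^{-n})$, which is exactly the assertion $\partial = -n\cdot\mu$. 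One should note that $\partial$ is well-defined independently of the choice of lift because changing $a_{-n}t^{-n}$ by an element of $F_{n-1}K = \mathfrak{m}_K^{-n+1}$ changes $dx$ by an element of $F_{n-1}\Omega^1_K$ by the same computation.

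The only genuine subtlety — and the step I would be most careful about — is the use of the coefficient field $k\hookrightarrow K$ (equivalently, a power-series expansion $K\cong k((t))$) to write elements with honest constant coefficients; this is where "equal characteristic" and "$k$ perfect" are used, and it is what makes the derivation $d$ behave as $\sum i a_i t^{i-1}dt$ rather than producing extra terms $t^i\,da_i$. Once that normal form is in hand, both parts are short and formal: everything is determined by the behaviour of the top-degree coefficient, and the factor $-n$ in (\ref{lr-2}) is simply the exponent that gets pulled down by $d\log$. I would also remark that (\ref{lr-2}) shows $\partial$ is an isomorphism precisely when $n\not\equiv 0\pmod p$, which is the fact that actually gets used later when analysing $\Gr H^1$.
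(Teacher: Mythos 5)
Your proposal is correct and follows essentially the same route as the paper: part (\ref{lr-1}) via the identity $F_n\Omega_K^1=t^{-1}dt\,F_nK$ together with the observation that changing the uniformizer alters $t^{-1}dt$ only by an element of $\Omega_{\mathcal{O}_K}^1$ (your $t'=ut$ computation is just a minor variant of the paper's $t'=at+bt^2$), and part (\ref{lr-2}) by the computation $d(t^{-n})=-nt^{-n-1}dt$, which you merely spell out more explicitly using the expansion in $k((t))$ and $dk=0$. No gaps; your flagged subtlety about the coefficient field is exactly the point the paper leaves implicit.
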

\begin{proof} (\ref{lr-1}) The multiplication by $t^{-1}dt$ induces an isomorphism $\mu_{K}$, since the multiplication is clearly an isomorphism and $F_n\Omega_{K/k}^1=t^{-1}dtF_nK$. For any uniformizer $t,t'$ of $K$, there exist $0\neq a\in k$ and $b\in\mathcal{O}_K$ satisfying $t'=at+bt^2$. Hence we have
  \begin{equation}
  t'^{-1}dt'=t^{-1}(a+bt)^{-1}(adt+ d(bt^2)).
  \end{equation}
  Since $(a+bt)^{-1}\in a^{-1}+\mathfrak{m}$ and $d(bt^2)\in \mathfrak{m}dt$, we have
  \begin{equation}
t'^{-1}dt'\equiv t^{-1}dt \mod \mathcal{O}_Kdt.
\end{equation}
Therefore, $\mu_{K}$ does not depend on the uniformizer.

(\ref{lr-2})  Let $t$ be a uniformizer of $K$. Since $d(t^{-n})=-nt^{-n-1}dt$, $d$ induces $\partial$, and we have $\partial=-n\cdot\mu_{K}$.
\end{proof}

\begin{lemm}\label{lg}
  Let $L/K$ be a finite separable totally ramified extension of complete discrete valuation fields with residue field $k$, $e$ the ramification index of $L/K$,  and $\delta$ the valuation of the different of $L/K$. For any integer $n$, let $n'=e(n+1)-\delta-1$. 

  The canonical morphisms $K\to L$ and $\Omega_{K/k}^1\to\Omega_{L/k}^1$ induce $F_nK\to F_{en}L$, $F_{n-1}K\to F_{en-1}L$, $F_n\Omega_{K/k}^1\to F_{n'}\Omega_{L/k}^1$, and $F_{n-1}\Omega_{K/k}^1\to F_{n'-1}\Omega_{L/k}^1$. There exists a unique non-zero element $\theta\in\Gr_{e-\delta-1}L\simeq k$ such that for all $n$, the diagram below is commutative, where $\mu_{K}$ is the morphism in Lemma \ref{lr}(\ref{lr-1}) and $\mu_{L}:\Gr_nL\to\Gr_n\Omega_{L/k}^1$ is the morphism defined in the same manner.
    \begin{equation}\label{da}
    \xymatrix{
      \Gr_nK\ar[r]\ar[d]_{\mu_{K}}&\Gr_{en}L\ar[r]^{\theta\cdot}&\Gr_{n'}L\ar[d]^{\mu_{L}}\\
      \Gr_n\Omega_{K/k}^1\ar[rr]&&\Gr_{n'}\Omega_{L/k}^1
    }
    \end{equation}
    Moreover, for all $\chi\in\Omega_{K/k}^1$ such that $-v_K(\chi)=n$, we have $-v_L(\chi)=n'=e(n+1)-\delta-1$. 
\end{lemm}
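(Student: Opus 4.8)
The plan is to work locally with uniformizers. Fix a uniformizer $s$ of $K$ and a uniformizer $t$ of $L$. The key structural fact is that the different $\mathfrak{d}_{L/K}$ has valuation $\delta$, so that $\Omega_{\mathcal{O}_L/\mathcal{O}_K}^1$ is killed by $\mathfrak{m}_L^\delta$ and, more precisely, the natural map $\Omega_{\mathcal{O}_K}^1\otimes_{\mathcal{O}_K}\mathcal{O}_L\to\Omega_{\mathcal{O}_L}^1$ has image exactly $\mathfrak{m}_L^\delta\,\Omega_{\mathcal{O}_L}^1$; concretely, $ds = u\,t^\delta\,dt$ for some unit $u\in\mathcal{O}_L^\times$, after absorbing the contribution of $e$ using $s \sim t^e$ (up to a unit) so that $s^{-1}ds = (\text{unit})\cdot t^{-e}\cdot t^\delta\, dt = (\text{unit})\,t^{\delta-e}\,dt$. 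From this one reads off the index shift: multiplication by $s^{-1}ds$ sends $F_nK$ into $\mathfrak{m}_L^{-en}\cdot t^{\delta-e}\,dt$-generated pieces, and matching this with $F_{n'}\Omega_L^1 = \mathfrak{m}_L^{-n'-1}\Omega_{\mathcal{O}_L}^1$ forces $-n'-1 = -en + \delta - e$, i.e. $n' = e(n+1)-\delta-1$ as defined. The first assertion, that the canonical maps respect the stated filtration steps, is then immediate: $F_nK = \mathfrak{m}_K^{-n}\hookrightarrow \mathfrak{m}_L^{-en} = F_{en}L$ since $\mathfrak{m}_K\mathcal{O}_L = \mathfrak{m}_L^e$, and similarly for the $\Omega^1$ statements using $\Omega_{\mathcal{O}_K}^1\otimes\mathcal{O}_L \subset \mathfrak{m}_L^{-\delta}\Omega_{\mathcal{O}_L}^1$ wait—rather $\Omega_{\mathcal{O}_K}^1\otimes\mathcal{O}_L$ maps \emph{onto} $\mathfrak{m}_L^\delta\Omega_{\mathcal{O}_L}^1$, so $F_n\Omega_K^1 = \mathfrak{m}_K^{-n-1}\Omega_{\mathcal{O}_K}^1$ maps into $\mathfrak{m}_L^{-e(n+1)}\cdot\mathfrak{m}_L^\delta\Omega_{\mathcal{O}_L}^1 = \mathfrak{m}_L^{\delta-e(n+1)}\Omega_{\mathcal{O}_L}^1 = \mathfrak{m}_L^{-n'-1}\Omega_{\mathcal{O}_L}^1 = F_{n'}\Omega_L^1$, and likewise with $n-1, n'-1$.

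Next I would construct $\theta$. Lemma \ref{lr}(\ref{lr-1}) gives, for both $K$ and $L$, canonical isomorphisms $\mu\colon \Gr_m(-)\to\Gr_m\Omega^1_{(-)}$ given by multiplication by ``$\frac{dt}{t}$'' (independent of uniformizer). The left vertical and bottom horizontal maps of the diagram are already defined; the top row should be: reduce $F_nK\to F_{en}L$ to graded pieces $\Gr_nK\to\Gr_{en}L$, then multiply by a to-be-determined $\theta\in\Gr_{e-\delta-1}L$ landing in $\Gr_{n'}L$ (note $en + (e-\delta-1) = e(n+1)-\delta-1 = n'$, so the degrees match). Commutativity of \eqref{da} says: for $x\in F_nK$ with image $\bar x\in\Gr_nK$, we need $\mu(\theta\cdot\bar x) = $ image of $\mu(\bar x) = \overline{x\,\frac{ds}{s}}$ under $\Gr_n\Omega_K^1\to\Gr_{n'}\Omega_L^1$. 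Writing $\frac{ds}{s} = w\,\frac{dt}{t}$ with $w = s^{-1}ds\cdot(t^{-1}dt)^{-1}$, from $s\sim t^e\cdot(\text{unit})$ and $ds = (\text{unit})t^\delta dt$ one computes $w = (\text{unit})\,t^{\delta - e + 1}\cdot t^{-1}\cdot t \cdot \ldots$ — more carefully, $w\in\mathfrak{m}_L^{\delta-e+1}\mathcal{O}_L^\times$, so its leading term defines a nonzero element of $\Gr_{-(\delta-e+1)}L = \Gr_{e-\delta-1}L$; this is exactly $\theta$. Uniqueness is clear since the diagram for any single $n$ with $\bar x\neq 0$ pins down $\theta$, and $\Gr K$ has no zero divisors.

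The final assertion then follows formally. Given $\chi\in\Omega_K^1$ with $-v_K(\chi) = n$, its class in $\Gr_n\Omega_K^1$ is nonzero; writing $\chi = x\,\frac{ds}{s}$ with $\bar x\neq 0$ in $\Gr_nK$ (using the isomorphism $\mu$ over $K$), commutativity of \eqref{da} shows the image of $\chi$ in $\Gr_{n'}\Omega_L^1$ equals $\mu(\theta\cdot\bar x)$, which is nonzero because $\theta$ is nonzero, $\bar x$ is nonzero, $\Gr L$ is a domain, and $\mu$ over $L$ is an isomorphism. Hence $\chi\in F_{n'}\Omega_L^1\setminus F_{n'-1}\Omega_L^1$, i.e. $-v_L(\chi) = n' = e(n+1)-\delta-1$.

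The main obstacle is the bookkeeping in identifying the leading term of $w = \frac{ds}{s}\big/\frac{dt}{t}$ and verifying it has valuation exactly $\delta - e + 1$ (equivalently, that $ds$ generates $\mathfrak{d}_{L/K}^{-1}\cdot(\text{conormal data})$ correctly): one must invoke the characterization of the different via $v_L(f'(t))$ where $f$ is the minimal polynomial of $t$ over $K$, or equivalently the exact sequence $0\to \mathfrak{d}_{L/K}/\mathfrak{m}_L\Omega^1_{\mathcal{O}_K}\otimes\mathcal{O}_L\to\Omega^1_{\mathcal{O}_K}\otimes\mathcal{O}_L\to\Omega^1_{\mathcal{O}_L}\to\Omega^1_{\mathcal{O}_L/\mathcal{O}_K}\to 0$ — but since $k$ is perfect and the extension is totally ramified, $\Omega^1_{\mathcal{O}_K}\otimes\mathcal{O}_L\to\Omega^1_{\mathcal{O}_L}$ is injective with image $\mathfrak{m}_L^\delta\Omega^1_{\mathcal{O}_L}$, and $ds\mapsto (\text{unit})t^\delta dt$ follows directly. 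Everything else is a degree count.
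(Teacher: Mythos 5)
Your proposal is correct and follows essentially the same route as the paper: the paper also sets $\lambda=\frac{t_K^{-1}dt_K}{t_L^{-1}dt_L}$ (your $w$), notes $v_L(\lambda)=\delta+1-e$ via $dt_K=(\mathrm{unit})\,t_L^{\delta}\,dt_L$, and defines $\theta$ as its class in $\Gr_{e-\delta-1}L$, with commutativity of the diagram following from $x\,\frac{dt_K}{t_K}=\lambda x\,\frac{dt_L}{t_L}$. Your write-up just carries out the filtration bookkeeping, uniqueness, and the final valuation claim more explicitly than the paper does.
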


\begin{proof}
  Let $t_K$ and $t_L$ denote a uniformizer of $K$ and $L$ respectively. Let
  \begin{equation}
    \lambda=\frac{{t_K}^{-1}dt_K}{{t_L}^{-1}dt_L}\in F_{e-\delta-1}L.
  \end{equation}
  The following diagram is commutative:
  \begin{equation}
  \xymatrix{
    K\ar[r]^{\lambda\cdot}\ar[d]&L\ar[d]\\
    \Omega_{K/k}^1\ar[r]&\Omega_{L/k}^1,
    }
  \end{equation}
  where the lower horizontal arrow denotes the canonical morphism, and the left and right vertical arrows denote the multiplication by ${t_K}^{-1}dt_K$ and ${t_L}^{-1}dt_L$ respectively. Thus setting $\theta\in\Gr_{e-\delta-1}L$ as the image of $\lambda$ makes the diagram (\ref{da}) commutative. It follows from Lemma \ref{lr}(\ref{lr-1}) that $\theta$ does not depend on the choices of $t_K$ and $t_L$.
  \end{proof}

 \begin{lemm}\label{le}
 Let $n$ be an integer. Consider $\nu:\Gr_nK\to\Gr_nH^1(K)$ induced by the canonical morphism $K\to H^1(K)$. If $n>0$ and $p\nmid n$, the morphism $\nu$ is an isomorphism. If $n=0$, the morphism $\nu$ is a surjection and $\Gr_0H^1(K)$ is isomorphic to $H^1(k)$. Otherwise, the morphism $\nu$ is the zero-map. 
\end{lemm}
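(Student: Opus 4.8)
The plan is to work with an explicit uniformizer $t$ of $K$ and use the fact that $\Gr K \cong k[t,t^{-1}]$ together with the Artin--Schreier map $P(x)=x^p-x$. First I would treat the case $n>0$ with $p\nmid n$. Here the key observation is that for $x\in F_nK = \mathfrak{m}_K^{-n}$ with $n>0$, the Frobenius power $x^p$ lies in $F_{pn}K$, so modulo $F_{n-1}K$ one has $P(x)\equiv -x$; more precisely, $P$ induces the identity (up to sign) on $\Gr_n K$ in this range because the dominant term of $x^p-x$ is $-x$. To get that $\nu$ is injective I would show $F_nK \cap P(K) \subseteq F_{n-1}K + P(F_{?}K)$: if $x\in F_nK$ and $x=P(z)=z^p-z$ for some $z\in K_s$, then since $v_K$ is the valuation and $v(z^p)=pv(z)$, a short case analysis on $v(z)$ forces either $v(z)\ge 0$ (so $x\in F_0K\subseteq F_{n-1}K$ when $n\ge 1$, handled separately at $n=1$ more carefully) or $pv(z) = v(z) $ impossible, or the leading term of $x$ is the leading term of $-z$ with $z\in F_nK$, and $p\nmid n$ guarantees $z^p$ does not interfere. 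Surjectivity of $\nu$ is immediate since $F_nH^1(K)$ is by definition the image of $F_nK$. Combining, $\nu\colon \Gr_nK\to \Gr_nH^1(K)$ is an isomorphism.

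Next, the case $n<0$ or ($n>0$ and $p\mid n$): I claim $F_nK\subseteq F_{n-1}K + P(K)$, so that $\nu$ is the zero map. For $n<0$ this is clear because $F_nK=\mathfrak{m}_K^{-n}\subseteq\mathfrak{m}_K$ and one can inductively kill the image: actually the cleanest route is to note that for any $x\in\mathfrak{m}_K$ the series $x+x^p+x^{p^2}+\cdots$ converges in the complete field $K$ and its limit $z$ satisfies $z^p - z = -x$ wait --- rather $z = \sum_{i\ge 0} x^{p^i}$ gives $z^p = z - x$, i.e. $P(z) = -x$, hence $x\in P(K)$ and its image in $H^1(K)$ is zero; so $F_{-1}K\subseteq P(K)$ and thus $\Gr_nH^1(K)=0$ for $n<0$. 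For $n>0$ with $p\mid n$, write $n=pm$ with $m>0$; given $x\in F_nK$, choose $w\in F_mK$ with $w^p \equiv x \bmod F_{n-1}K$ (possible since $k$ is perfect: match the leading coefficient $c\in k$, write $c = d^p$, take $w$ with leading term $d\,t^{-m}$), then $x - w^p = (x - P(w)) - w \in F_{n-1}K + P(F_mK)$ because $-w\in F_mK \subseteq F_{n-1}K$ as $m < n$; hence the image of $x$ in $\Gr_nH^1(K)$ vanishes. Iterating is not needed — a single step suffices for the graded statement.

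Finally, the case $n=0$. Surjectivity of $\nu$ is again automatic. For the identification $\Gr_0H^1(K)\cong H^1(k)$ I would use that $\mathcal{O}_K$ surjects onto $k$ with kernel $\mathfrak{m}_K = F_{-1}K$, and that by the previous paragraph $F_{-1}K\subseteq P(K)$, so $F_0H^1(K) = \mathrm{image\ of\ }\mathcal{O}_K = \mathcal{O}_K/(\mathcal{O}_K\cap P(K))$ while $F_{-1}H^1(K)=0$; thus $\Gr_0H^1(K) = \mathcal{O}_K/(\mathcal{O}_K\cap P(K))$. One then checks $\mathcal{O}_K\cap P(K) = P(\mathcal{O}_K) + \mathfrak{m}_K$: the inclusion $\supseteq$ uses $\mathfrak{m}_K\subseteq P(K)$ and $\mathfrak{m}_K\subseteq\mathcal{O}_K$; for $\subseteq$, if $u\in\mathcal{O}_K$ and $u=P(z)$ with $z\in K_s$, reducing valuations shows $z$ must be integral (if $v(z)<0$ then $v(P(z)) = pv(z) <0$, contradiction), so $u\in P(\mathcal{O}_K)$ up to the residue level. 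Hence $\Gr_0H^1(K)\cong \mathcal{O}_K/(\mathfrak{m}_K + P(\mathcal{O}_K)) \cong k/P(k) = \operatorname{Coker}(P\colon k\to k) \cong H^1(k)$, the last isomorphism being the Artin--Schreier description of $H^1(k)=H^1(G_k,\mathbb{F}_p)$ for the perfect field $k$.

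The main obstacle I anticipate is the injectivity half in the case $n>0$, $p\nmid n$: one must argue carefully that no element $z\in K_s$ with $z^p-z\in\mathfrak{m}_K^{-n}$ and $z\notin F_nK$ can exist, and that the interaction between the term $z^p$ (valuation $pv(z)$) and the term $z$ does not accidentally produce a leading term of valuation exactly $-n$ from below; the hypothesis $p\nmid n$ is exactly what is needed here, since $z^p$ has valuation divisible by $p$ and cannot cancel against a contribution at level $-n$. The rest is bookkeeping with the graded ring $k[t,t^{-1}]$ and the convergence argument for $\mathfrak{m}_K\subseteq P(K)$.
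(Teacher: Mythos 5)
Your proposal is correct and follows essentially the same route as the paper: for $n>0$ with $p\nmid n$ one observes that $v_K(P(z))$ for $z\in K$ is either $\geq 0$ or a negative multiple of $p$, so $P(K)\cap F_nK\subseteq F_{n-1}K$; for $p\mid n>0$ one extracts a $p$-th root of the leading coefficient using perfectness of $k$; and for $n\leq 0$ one uses $\mathfrak{m}_K\subseteq P(K)$ together with the identification $\Gr_0H^1(K)\cong k/P(k)\cong H^1(k)$. The only slip is writing $z\in K_s$ where you mean $z\in K$ (the relevant group is the cokernel of $P|_K$, and taking $z\in K_s$ literally would make the statement vacuous), but the valuation analysis you carry out is exactly the one needed for $z\in K$.
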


\begin{proof}
  Suppose $n>0$ and $p\nmid n$. Since there is no $x\in K$ satisfying $v_K(P(x))=-n$, we have $P(K)\cap(F_nK)\subset F_{n-1}K$. Thus, the morphism $\nu$ is an isomorphism.

Suppose $n=0$. The surjectivity follows from the definition of $\nu$. Since $F_{-1}K\subset P(K)$, we have
\[
\Gr_0H^1(K)\simeq (F_0K/F_{-1}K)/((F_0K\cap P(K))/F_{-1}(K))
\]
\begin{equation}
=k/(P(\mathcal{O}_K)/\mathfrak{m}_K)=k/P(k)\simeq H^1(k).
\end{equation}

Suppose $n>0$ and $p\mid n$. Take $n'$ such that $n=pn'$. For all $x\in F_nK$, there exists $y\in F_{n'}K$ such that $-v_K(x-P(y))<n$. Thus, $\nu$ is the zero-map.

Suppose $n<0$. Since $F_{-1}H^1(K)=0$, $\nu$ is the zero-map.
\end{proof}

For a finite Galois extension $L/K$ such that $K$ is a complete discrete valuation field of characteristic $p>0$, let $G_{L/K}$ denote its Galois group, and $L_{L/K},U_{L/K}$ the sets of indices at jumps of the lower and upper ramification groups of $G_{L/K}$ respectively. For $i\geq -1$, Let $G_{L/K,i},{G_{L/K}}^i$ denote the $i$-th lower and upper ramification group of $G_{L/K}$ respectively. Define the Herbrand function $\psi_{L/K}$ as in \cite{Se}, IV, \S 3.

\begin{lemm}\label{lab}
  Let $K$ be a complete discrete valuation field and $M/K$ a finite Galois extension. Let $L/K$ be a Galois subextension of $M/K$. Let $G=G_{M/K}$, $H=G_{M/L}$, and $\psi=\psi_{L/K}$. Then we have $G^i\cap H=H^{\psi(i)}$ for all $i\geq -1$.
\end{lemm}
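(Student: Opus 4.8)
The plan is to assemble the statement from three standard facts about ramification filtrations, all contained in \cite{Se}, IV: the definition of the upper numbering via the Herbrand function, the compatibility of the lower numbering with subgroups, and the transitivity of the Herbrand function. Throughout I use the usual convention $G_{M/K,u}=G_{M/K,\lceil u\rceil}$ for real $u\geq -1$, so that all the identities below make sense for non-integral arguments.

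First I would write $G^i=G_{M/K,\psi_{M/K}(i)}$ by the definition of the upper numbering, where $\psi_{M/K}$ denotes the Herbrand function of $M/K$. Since $H=G_{M/L}$, Galois theory gives $M^H=L$, so $H$ is precisely the Galois group of the extension $M/L$. Applying the compatibility of the lower numbering with subgroups (\cite{Se}, IV, \S 1, Proposition 2) to $H\subset G$ then yields $G_{M/K,u}\cap H=H_u$ for all real $u\geq -1$; in particular $G^i\cap H=H_{\psi_{M/K}(i)}$.

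Next I would pass back to the upper numbering for the extension $M/L$: by definition $H_v=H^{\phi_{M/L}(v)}$, where $\phi_{M/L}$ denotes the inverse of $\psi_{M/L}$, so that $G^i\cap H=H^{\phi_{M/L}(\psi_{M/K}(i))}$. Finally, the transitivity of the Herbrand function (\cite{Se}, IV, \S 3, Proposition 15) gives $\psi_{M/K}=\psi_{M/L}\circ\psi_{L/K}$, hence $\phi_{M/L}(\psi_{M/K}(i))=\phi_{M/L}(\psi_{M/L}(\psi_{L/K}(i)))=\psi_{L/K}(i)$, which gives $G^i\cap H=H^{\psi_{L/K}(i)}$ as claimed.

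This is essentially a bookkeeping argument, so I do not expect a genuine obstacle; the only mild subtlety is to ensure that Serre's propositions and the numbering conventions are invoked for real arguments rather than only integers, which is legitimate because the relevant statements and the ceiling convention all extend verbatim to real indices.
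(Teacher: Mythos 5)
Your proposal is correct and follows essentially the same route as the paper's proof: express $G^i$ in the lower numbering via $\psi_{M/K}$, intersect with $H$ using Serre IV, \S 1, Proposition 2, and convert back to the upper numbering for $M/L$ using the transitivity $\psi_{M/K}=\psi_{M/L}\circ\psi_{L/K}$ from Serre IV, \S 3, Proposition 15. The only cosmetic difference is that you pass through the inverse Herbrand function $\phi_{M/L}$ explicitly, whereas the paper writes $H^{\psi(i)}=H_{\psi_{M/L}(\psi(i))}$ directly.
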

\begin{proof} By definitions of the lower and upper ramification groups, we have $G^{i}=G_{\psi_{M/K}(i)}$ and $H^{\psi(i)}=H_{\psi_{M/L}\circ\psi(i)}$. By \cite{Se}, IV, \S 3, Proposition 15, we have $\psi_{M/K}=\psi_{M/L}\circ\psi$. By \cite{Se}, IV, \S 1, Proposition 2, $G_{\psi_{M/K}(i)}\cap H$ coincides with $H_{\psi_{M/K}(i)}$.
\end{proof}

      \begin{prop}\label{ls}
        Let $L/K$ be a ramified Artin-Schreier extension defined by $P(x)=a\;(a\in K-P(K))$. Let $m_a>0$ be the smallest integer such that the image of $a$ is in $F_{m_a}H^1(K)$. 
        \begin{enumerate}[(a)]
          \item\label{ls-4} The valuation of the different of $L/K$ is $(m_a+1)(p-1)$.
          \item\label{ls-1} We have $U_{L/K}=\{m_a\}$.
      \item\label{ls-3} We have
    \begin{equation}
\psi_{L/K}(i)=\max(i,pi-(p-1)m_a).
\end{equation}
\item\label{ls-2} For an integer $n$ such that $p\nmid n$, let $n'=pn-(p-1)m_a$ and $n''=\max(n,n')=\psi_{L/K}(n)$. The canonical morphism $H^1(K)\to H^1(L)$ induces $F_nH^1(K)\to F_{n''}H^1(L)$ and $\Gr_nH^1(K)\to \Gr_{n''}H^1(L)$. Define $\theta$ as in Lemma \ref{lg}. Then the $\mathbb{F}_p$-linear map
    \begin{equation}
    \begin{array}{cccc}
      u:&\Gr_{pn}L&\to&\Gr_{n''}L\\
      &x&\mapsto&\left\{
      \begin{array}{ll}
        \sqrt[p]{x}&(n<m_a)\\
        \sqrt[p]{x}+\theta x&(n=m_a)\\
        \frac{n}{m_a}\theta x&(n>m_a)
      \end{array}
      \right.
    \end{array}
    \end{equation}
    makes the diagram below commutative, where $\nu$ is defined as in Lemma \ref{le}
    \begin{equation} \label{db}
    \xymatrix{
      \Gr_nK\ar[r]\ar[d]_{\nu}&\Gr_{pn}L\ar[r]^{u}&\Gr_{n''}L\ar[d]^{\nu}\\
      \Gr_nH^1(K)\ar[rr]&&\Gr_{n''}H^1(L).
    }
    \end{equation}
    If $n\neq m_a$, then the map $u$ is an isomorphism. If $n=m_a$, then the kernel of $u$ is generated by the image of $a$.
    \end{enumerate}
 \end{prop}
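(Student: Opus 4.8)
The plan: parts (a)--(c) are standard ramification--group computations, and the real work is (d), which I would do by an explicit one--step reduction inside $L$ after fixing a uniformizer of $L$ adapted to the Artin--Schreier datum. Fix a representative $a\in K$ of its class with $v_K(a)=-m_a$ (possible by minimality of $m_a$; then $p\nmid m_a$ by Lemma~\ref{le}, since $L/K$ is ramified), and a root $\alpha\in L$ of $P(x)=a$, so that $v_L(\alpha)=-m_a$ and $L=K(\alpha)$. Choose integers $r\in\{1,\dots,p-1\}$ and $s\ge1$ with $ps-rm_a=1$ (so $p\nmid r$) and a scalar $\lambda_0\in k^\times$ (fixed below); then $\pi:=\lambda_0\,\alpha^{r}t_K^{s}$ is a uniformizer of $L$. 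For (a)--(c): for $\sigma\in G_{L/K}$, $\sigma\ne1$, we have $\sigma(\alpha)=\alpha+c_\sigma$ with $c_\sigma\in\mathbb F_p^\times$, hence $\sigma(\pi)/\pi=(1+c_\sigma\alpha^{-1})^{r}\equiv 1+rc_\sigma\alpha^{-1}\pmod{\mathfrak m_L^{2m_a}}$, so $i_{G_{L/K}}(\sigma)=v_L(\sigma(\pi)-\pi)=m_a+1$. By \cite{Se}, IV, \S1, Prop.~4 the valuation of the different is $\sum_{\sigma\ne1}i_{G_{L/K}}(\sigma)=(p-1)(m_a+1)$, which is (a); the unique lower break is at $m_a$, so $\varphi_{L/K}$ is the identity on $[-1,m_a]$ with slope $1/p$ beyond, whence the unique upper break is $\varphi_{L/K}(m_a)=m_a$ (part (b)) and, inverting, $\psi_{L/K}(i)=\max(i,\,pi-(p-1)m_a)$ (part (c)).

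For (d) the crucial input is the shape of $t_K$ in the uniformizer $\pi$. Write $a=\bar a\,t_K^{-m_a}(1+a')$ with $\bar a\in k^\times$, $a'\in\mathfrak m_K$. Using $\alpha^{p}=\alpha+a$ together with $ps-rm_a=1$, one computes (choosing $\lambda_0$ so that $\lambda_0^{p}=\bar a^{-r}$) that
\[
t_K=\pi^{p}u,\qquad u=(1+a')^{-r}(1+\alpha/a)^{-r}.
\]
Since $a'\in K$, every term of $(1+a')^{-r}-1$ has $v_L$ divisible by $p$; since $v_L(\alpha/a)=(p-1)m_a$ and $p\nmid r$, the lowest--valuation term of $(1+\alpha/a)^{-r}-1$ is $c\,\pi^{(p-1)m_a}$ with $c\in k^\times$. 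Hence every term of $u-1$ has valuation either divisible by $p$ or $\ge(p-1)m_a$, with $\pi^{(p-1)m_a}$--coefficient $c\ne0$. Applying Lemma~\ref{lg} with $e=p$ and $\delta=(p-1)(m_a+1)$ (so $e-\delta-1=-(p-1)m_a$ and $n'=pn-(p-1)m_a$), and using $dt_K=\pi^{p}\,du$, one identifies $\theta\in\Gr_{-(p-1)m_a}L$ with the class of $-c\,m_a\,\pi^{(p-1)m_a}$.

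Now diagram (\ref{db}). The map $F_nH^1(K)\to F_{n''}H^1(L)$ exists by part (c) (compatibility of the ramification filtration with restriction), or falls out of the computation below, and descends to $\Gr$; since $p\nmid n$, also $p\nmid n''$ and $n''>0$, so $\nu_L$ is an isomorphism on $\Gr_{n''}L$. By Lemma~\ref{le} it suffices to check commutativity on the class of $x=\bar\beta\,t_K^{-n}$ with $\bar\beta\in k$. Write $x=\bar\beta\pi^{-pn}u^{-n}$ and split $u^{-n}=g_0(\pi^{p})+\sum_{i=1}^{p-1}\pi^{i}g_i(\pi^{p})$ by exponent mod $p$; by the structure of $u$, $g_0$ has constant term $1$ and the lowest--valuation term of $\sum_{i\ge1}\pi^{i}g_i(\pi^{p})$ is $-nc\,\pi^{(p-1)m_a}$. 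Put $z:=\bigl(\bar\beta\,g_0(\pi^{p})\bigr)^{1/p}\pi^{-n}\in L$, so $z^{p}=\bar\beta\,g_0(\pi^{p})\pi^{-pn}$ and
\[
x-P(z)=\bar\beta\sum_{i=1}^{p-1}\pi^{i-pn}g_i(\pi^{p})+z,
\]
where the first summand has valuation $-pn+(p-1)m_a$ with leading coefficient $-\bar\beta nc$, and $v_L(z)=-n$ with leading coefficient $\bar\beta^{1/p}$. Since $-pn+(p-1)m_a$ equals $-n''$ for $n\ge m_a$ and is $>-n=-n''$ for $n<m_a$, there are no terms below $-n''$; hence $[x]|_{G_L}$, the class of $x-P(z)$, lies in $F_{n''}H^1(L)$ and its image in $\Gr_{n''}H^1(L)$ is $\nu_L$ of the $\pi^{-n''}$--coefficient of $x-P(z)$, namely $\bar\beta^{1/p}$ if $n<m_a$, $\bar\beta^{1/p}-\bar\beta m_a c$ if $n=m_a$, and $-\bar\beta nc$ if $n>m_a$. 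Comparing with $u$ applied to the class $\bar\beta\,[\pi^{-pn}]$ of $x$ in $\Gr_{pn}L$, using $\theta=$ class of $-cm_a\pi^{(p-1)m_a}$ and $\tfrac{n}{m_a}\cdot m_a=n$ in $\mathbb F_p$, yields exactly the three stated formulas, so (\ref{db}) commutes. Finally $\sqrt[p]{\phantom{x}}\colon\Gr_{pn}L\to\Gr_nL$ and multiplication by $\theta\ne0$ are bijective, so $u$ is an isomorphism when $n\ne m_a$; when $n=m_a$, $u(y)=0$ forces $y=(-\theta)^{p}y^{p}$, so $\ker u$ has at most $p$ elements, and since $a=P(\alpha)$ the image of $a$ in $\Gr_{pm_a}L$ lies in $\ker u$ and therefore generates it.

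The step I expect to be the main obstacle is the one in the second paragraph: establishing the expansion $t_K=\pi^{p}u$ and the precise shape of $u$ for the adapted uniformizer $\pi$ (equivalently, controlling the expansion of $t_K$ in terms of $\pi$), since this is what makes the one--step reduction possible and, via Lemma~\ref{lg}, forces the value of $\theta$. The ramification--group bookkeeping in (a)--(c) and the coefficient chase in (d) are then routine.
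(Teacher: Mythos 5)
Your proposal is correct in substance but follows a genuinely different route from the paper, most visibly in part (\ref{ls-2}). The paper never chooses an adapted uniformizer: for (\ref{ls-4}) it simply cites Hasse, and for (\ref{ls-2}) it takes any $s\in L$ with $v_L(y-s^p)>-np$, sets $s'=y-s^p$, and reads off the class of $s'$ from $ds'=dy$ via the graded comparison of differentials (Lemmas \ref{lr} and \ref{lg}), so that the leading coefficient $\tfrac{n}{n'}\theta\bar y$ and the value $v_L(s')=-n'$ come out of $\partial=-n\mu$ and the abstractly defined $\theta$, with no $\pi$-adic bookkeeping; the compatibility $F_nH^1(K)\to F_{n''}H^1(L)$ is obtained from Lemma \ref{lab} rather than from the explicit reduction, and the kernel statement for $n=m_a$ is argued exactly as you do. Your explicit route (uniformizer $\pi=\lambda_0\alpha^rt_K^s$, $t_K=\pi^pu$, splitting $u^{-n}$ by exponent mod $p$) buys a concrete formula for $\theta$ as the class of $-cm_a\pi^{(p-1)m_a}$, but it hinges on the structure of the $\pi$-adic expansion of $u$, and there your one-line justification is stated too strongly: it is not true that elements of $K$ have $\pi$-expansions supported in exponents divisible by $p$ (indeed $t_K=\pi^pu$ itself contains the term $c\pi^{p+(p-1)m_a}$), so ``every term of $(1+a')^{-r}-1$ has $v_L$ divisible by $p$'' is false in the sense you later use when you split $u^{-n}$ into $g_0(\pi^p)+\sum_i\pi^ig_i(\pi^p)$. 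The statement you actually need is weaker and true: every exponent in the expansion of $u-1$ is either divisible by $p$ or $\ge(p-1)m_a$, with the non-$p$-divisible exponents coming from $K$-elements being $\ge p+(p-1)m_a>(p-1)m_a$, so the coefficient at $(p-1)m_a$ is exactly $c$; this requires a short successive-approximation (induction on precision, using $t_K=\pi^pu$ and multiplicativity of the property) rather than the assertion as written. With that repair your coefficient chase, the identification of $\theta$, the three cases for the leading term of $x-P(z)$, and the kernel argument all check out; the paper's differential argument is precisely the device that makes this delicate expansion control unnecessary. (Your direct computation of $i_{G_{L/K}}(\sigma)=m_a+1$ for (\ref{ls-4})--(\ref{ls-3}) is a fine, self-contained replacement for the citation of Hasse, and the minor point that the bottom graded map is well defined, which you defer to ``falls out of the computation,'' does indeed follow by applying your reduction to elements of $F_{n-1}K$, or by the paper's Lemma \ref{lab}.)
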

      \begin{proof}
(\ref{ls-4}) The claim follows from the beginning (p.\ 42) of Section b) of \cite{Ha}.
        
        (\ref{ls-1}) By (\ref{ls-4}) and \cite{Se}, IV, \S 1, Proposition 4, we have $U_{L/K}=\{m_a\}$.

(\ref{ls-3}) This follows from the definition of $\psi_{L/K}$.
        
        (\ref{ls-2}) Define $\mu_{K}$ and $\mu_{L}$ as in Lemma \ref{lr}(\ref{lr-1}). Take $y\in K$ such that $v_K(y)=-n$. Let $\eta$ be an element of $K_s$ such that $P(\eta)=y$. When $y\notin P(L)$, applying Lemma \ref{lab} to $L(\eta)/K$ we get $y\in F_{n''}H^1(L)$ by (\ref{ls-1}). When $y\in P(L)$, we have $y\equiv 0\in H^1(L)$. Thus $y\in F_{n''}H^1(L)$ also in this case. Thus the canonical morphism $H^1(K)\to H^1(L)$ induces $F_nH^1(L)\to F_{n''}H^1(L)$ and $\Gr_nH^1(K)\to \Gr_{n''}H^1(L)$.

         Let $\bar{y}$ denote the image of $y$ in $\Gr_nK$. By (\ref{ls-4}) and Lemma \ref{lg}, we have $-v_L(dy)=pn-(p-1)m_a=n'$. Since $k$ is perfect, there exists $s\in L$ such that $v_L(y-s^p)>-np$. We have $v_L(s)=-n$. Let ${s'}=y-s^p$. Since $dy\neq 0$ in $\Omega_{L/k}^1$, we may assume $p\nmid v_L({s'})$. Then we have $v_L({s'})=v_L(d{s'})$. Since $d{s'}=dy$, we have $v_L({s'})=-n'$.

        We have ${s'}+s=y-P(s)\equiv y\in H^1(L)$. We will now write the images of ${s'}$ and $s$ in $\Gr_{n'}L$ and $\Gr_nL$ respectively in terms of $\bar{y}$. By Lemma \ref{lr}(\ref{lr-2}) and Lemma \ref{lg}, the image of $d{s'}=dy$ in $\Gr_{n'}\Omega_{L/k}^1$ equals
        \begin{equation}
        -n'\mu_{K}({s'})=\mu_{L}(-n\theta\bar{y}).
        \end{equation}
        Therefore, the image of ${s'}$ equals $\displaystyle{\frac{n}{n'}\theta\bar{y}}$ in $\Gr_{n'}L$. By definition, the image of $s$ in $\Gr_nL$ is $\sqrt[p]{\bar{y}}$.
Thus the image of ${s'}+s$ equals
  \begin{equation}
  \left\{\begin{array}{ll}
        \sqrt[p]{\bar{y}}&(n<m_a)\\
        \sqrt[p]{\bar{y}}+\theta\bar{y}&(n=m_a)\\
        \frac{n}{n'}\theta\bar{y}&(n>m_a)
      \end{array}\right.
  \end{equation}
  in $\Gr_{n''}L$. Since $n'\equiv m_a\mod p$, the diagram (\ref{db}) is commutative.

  If $n<m_a$, then the map $u$ is an isomorphism, since $k$ is perfect.

  If $n>m_a$, then the map $u$ is an isomorphism, since $n$ is prime to $p$.

  If $n=m_a$, then the kernel of $u$ is generated by $(-\theta)^{-\frac{p}{p-1}}$. On the other hand, the image of $a$ in $H^1(L)$ equals 0. Thus, there exists $i\in\mathbb{F}_p^\times$ such that $a\equiv i(-\theta)^{-\frac{p}{p-1}}$ in $\Gr_{pm_a}L$, and the kernel of $u$ is generated by the class of $a$.
\end{proof}

\section{Calculation of the Ramification Groups}\label{sc}
Let $2\leq n\leq p$. Recall the algebraic group $G(n)\subset GL(n+1)$ over $\mathbb{F}_p$ and its descending central series 
\begin{equation}
G(n)\supsetneq Z_1G(n)\supsetneq\cdots\supsetneq Z_nG(n)=\{1\}
\end{equation}
of Definition \ref{dfb}(\ref{dfb-2}).  

Let $K$ be a complete discrete valuation field, and $K_s$ a separable closure of $K$. Define $P:K_s\to K_s$ by $x\mapsto x^p-x$.  Assume that the residue field $k$ of $K$ is perfect of characteristic $p>0$. Take $K\subset M_1\subset\cdots\subset M_n$ as in Definition \ref{dfd}. Assume that $M_n/K$ is totally ramified. Recall that $L_{E/K},U_{E/K}$ denote the sets of indices at jumps of the lower and upper ramification groups respectively of the Galois group $G_{E/K}$ for a field extension $E/K$. Then we have
\begin{equation}
U_{M_1/K}\subset\cdots\subset U_{M_n/K}
\end{equation}
by \cite{Se}, IV, \S 3, Proposition 14.

\begin{lemm}\label{lad}
  There exists a sequence $r_1<\cdots<r_n$ of rational numbers such that for all $q>r_1$, we have
  \begin{equation}\label{eo}
  G_{M_n/K}^{q}=G_{M_n/M_j}\simeq  Z_jG(n,\mathbb{F}_p),
  \end{equation}
  where $j$ is the largest integer satisfying $q>r_j$. Moreover, for $1\leq j\leq n$, $r_j$ is the largest element of $U_{M_j/K}$.
\end{lemm}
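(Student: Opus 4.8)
The plan is to analyze the upper ramification groups of $G = G_{M_n/K}$ using the lower central series $G \supsetneq Z_1G \supsetneq \cdots \supsetneq Z_nG = \{1\}$ together with the Galois subextensions $K \subset M_1 \subset \cdots \subset M_n$ they cut out (Definition \ref{dfd}). The key structural input is that $M_n/K$ is totally ramified, so $G^0 = G$, and that the subgroups $G_{M_n/M_j} = Z_jG(\mathbb{F}_p)$ are precisely the terms of the lower central series; since these are characteristic subgroups of $G$, any upper ramification group $G^q$ — being normal in $G$ — must either contain or be contained in each of them only insofar as the ramification filtration forces. The crucial point is the well-known fact (Herbrand, \cite{Se}, IV, \S3) that the upper numbering filtration $(G^q)_q$ is a descending chain of \emph{normal} subgroups of $G$, right-continuous, with $G^q = \{1\}$ for $q$ large. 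So the strategy is: first identify which subgroups of $G$ actually occur as some $G^q$, then show they are exactly the $Z_jG(\mathbb{F}_p)$, then read off the jumps $r_j$.

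First I would recall that $U_{M_1/K} \subset \cdots \subset U_{M_n/K}$ (already noted in the excerpt, via \cite{Se}, IV, \S3, Proposition 14), and set $r_j := \max U_{M_j/K}$; I must check this maximum exists, i.e., that each $M_j/K$ is genuinely ramified with at least one jump, which follows because $M_j/K$ is totally ramified (a subextension of the totally ramified $M_n/K$) and nontrivial. Monotonicity of the $U_{M_j/K}$ gives $r_1 \le \cdots \le r_n$; strictness $r_1 < \cdots < r_n$ should come from the fact that the successive quotients $G_{M_j/M_{j-1}} \cong Z_{j-1}G/Z_jG \cong \mathbb{G}_a(\mathbb{F}_p)$ are nontrivial and each introduces a genuinely new jump — concretely, if $r_{j-1} = r_j$ then the jump of $M_j/M_{j-1}$ would have to coincide with an earlier one, which one rules out using Lemma \ref{lab} ($G^i \cap H = H^{\psi(i)}$) applied to the tower $M_j \supset M_{j-1} \supset K$ and the Artin–Schreier structure of $M_j/M_{j-1}$, together with the fact that these Artin–Schreier steps have strictly increasing conductors (this is really what Section \ref{sc}'s later computation of $r_n$ refines).

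Next, the heart: for $q > r_1$, let $j$ be the largest index with $q > r_j$; I claim $G^q = G_{M_n/M_j} = Z_jG(\mathbb{F}_p)$. By Lemma \ref{lab} with $M = M_n$, $L = M_j$, $H = G_{M_n/M_j}$, and $\psi = \psi_{M_j/K}$, we have $G^q \cap H = H^{\psi_{M_j/K}(q)}$. Since $q > r_j = \max U_{M_j/K}$ and $\psi_{M_j/K}$ is increasing with $\psi_{M_j/K}(q) \ge q$ (actually $> r_j$-side), we get $G^q \cap G_{M_n/M_j} = $ the whole of $G_{M_n/M_j}$... wait, I need this the other way: I want to show $G^q \subseteq G_{M_n/M_j}$ and $G^q \supseteq$ it. For the containment $G^q \subseteq G_{M_n/M_j}$: the quotient $G/G_{M_n/M_j} = G_{M_j/K}$ has upper ramification groups $(G_{M_j/K})^q = $ image of $G^q$, and this is trivial for $q > r_j = \max U_{M_j/K}$; hence $G^q \subseteq G_{M_n/M_j}$. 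For the reverse, $G^q \supseteq G_{M_n/M_j}$: use that $j$ was chosen largest with $q > r_j$, so $q \le r_{j+1} = \max U_{M_{j+1}/K}$ (when $j < n$), meaning $(G_{M_{j+1}/K})^q \ne \{1\}$, i.e. $G^q \not\subseteq G_{M_n/M_{j+1}}$; combined with $G^q$ being one of the normal subgroups of $G$ sandwiched in the lower central series and $G^q \subseteq G_{M_n/M_j}$, and using that $G_{M_n/M_j}/G_{M_n/M_{j+1}} \cong \mathbb{G}_a(\mathbb{F}_p)$ is one-dimensional (so $G^q$ is either all of $G_{M_n/M_j}$ or contained in $G_{M_n/M_{j+1}}$ — here the nilpotency/maximal-class structure, $Z_{j-1}G/Z_jG \cong \mathbb{F}_p$, is essential), we conclude $G^q = G_{M_n/M_j}$. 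The case $j = n$ gives $G^q = \{1\}$, consistent with $q > r_n$.

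The main obstacle I anticipate is the dichotomy "$G^q$ is all of $G_{M_n/M_j}$ or lies inside $G_{M_n/M_{j+1}}$": this needs $G^q$ to be not merely a subgroup but a subgroup \emph{normal in $G$}, so that its image in the one-dimensional $\mathbb{F}_p$-vector space $G_{M_n/M_j}/G_{M_n/M_{j+1}} \cong Z_jG/Z_{j+1}G$ is a $G$-stable subspace — and one must verify the $G$-action on this quotient is trivial (it is, since these are central quotients in the lower central series, $[G, Z_jG] \subseteq Z_{j+1}G$), so every subspace is stable and the argument instead rests purely on it being $1$-dimensional: a subgroup of $G_{M_n/M_j}$ that maps \emph{onto} this $\mathbb{F}_p$ must, together with normality and the filtration being a chain, equal $G_{M_n/M_j}$. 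Pinning down exactly why $G^q$ cannot be a "diagonal" subgroup strictly between consecutive terms — which is where maximal nilpotency class (each lower central quotient is exactly $\mathbb{F}_p$, forcing the ramification filtration to descend through the lower central series one step at a time) does the work — is the delicate part; everything else is bookkeeping with Lemma \ref{lab} and the definition of $r_j$.
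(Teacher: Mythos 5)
There is a genuine gap at exactly the point you yourself flag as the ``delicate part'', and it is not mere bookkeeping: as written, neither the reverse inclusion $G^q\supseteq G_{M_n/M_j}$ nor the strictness $r_1<\cdots<r_n$ is established. Knowing only that $G^q\subseteq G_{M_n/M_j}$, that $G^q$ is normal in $G_{M_n/K}$, and that its image in $G_{M_n/M_j}/G_{M_n/M_{j+1}}\simeq\mathbb{F}_p$ is nontrivial does not by itself give $G^q=G_{M_n/M_j}$: your dichotomy rests on the assertion that $G^q$ is ``one of the normal subgroups sandwiched in the lower central series'', which is precisely what is to be proved. You could close it either by returning to the descending induction you start and then abandon (monotonicity of the upper filtration gives $G^q\supseteq G^{q'}=G_{M_n/M_{j+1}}$ for $q\le r_{j+1}<q'$), or by a commutator argument using $[G,Z_jG]=Z_{j+1}G$; you carry out neither. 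Moreover your proposed source of strictness is circular: the fact that the Artin--Schreier layers have strictly increasing conductors, $m_a<m'_2<\cdots<m'_n$, is Lemma \ref{lac}(b) in the paper and is deduced \emph{from} Lemma \ref{lad}, so it cannot be used to prove it.

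The missing idea, which is essentially the paper's entire proof, is centrality of the last ramification group. The paper argues by descending induction on $j$: with $l=\max L_{M_{j+1}/K}$, Proposition 10 of \cite{Se}, IV, \S 2 gives $[G_{M_{j+1}/K},G_{M_{j+1}/K,l}]\subseteq G_{M_{j+1}/K,l+1}=\{1\}$, so the nontrivial group $G_{M_{j+1}/K,l}=G_{M_{j+1}/K}^{r_{j+1}}$ lies in $Z(G_{M_{j+1}/K})=G_{M_n/M_j}/G_{M_n/M_{j+1}}\simeq\mathbb{F}_p$ and hence equals it. This one fact closes both holes at once: combined with Proposition 14 of \cite{Se}, IV, \S 3 (your inclusion $G^q\subseteq G_{M_n/M_j}$ for $q>r_j$) and the induction hypothesis it identifies $G^q$ for $r_j<q\le r_{j+1}$; and since this central subgroup maps to $\{1\}$ in $G_{M_j/K}$ while $G_{M_j/K}^{r_j}\neq\{1\}$, it forces $r_j<r_{j+1}$. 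Your outline shares the paper's skeleton (set $r_j=\max U_{M_j/K}$, use Proposition 14 for one inclusion, exploit that the successive quotients are $\mathbb{F}_p$), but without this ramification-theoretic input the argument does not close, and for strictness no purely group-theoretic substitute is available.
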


\begin{proof}
  We prove this lemma by descending induction on $j$.

  Suppose $j=n$. Let $r_n$ be the largest element of $U_{M_{n}/K}$. Then we have (\ref{eo}) for $q>r_n$.
  
  Suppose $1\leq j\leq n-1$. Assume that we have (\ref{eo}) for $q>r_{j+1}$. Let $l,r_j$ be the largest elements of $L_{M_{j+1}/K},U_{M_{j}/K}$ respectively. Then we have $G_{M_n/K}^{q}\subset G_{M_n/M_j}$ for $q>r_j$ by \cite{Se}, IV, \S 3, Proposition 14. It suffices to show $G_{M_{j+1}/K}^{r_{j+1}}=G_{M_{j+1}/K,l}=Z(G_{M_{j+1}/K})$, since $Z(G_{M_{j+1}/K})=G_{M_n/M_{j}}/G_{M_n/M_{j+1}}$. By \cite{Se}, IV, \S 2, Proposition 10, we have
  \begin{equation}
    [G_{M_{j+1}/K},G_{M_{j+1}/K,l}]=[G_{M_{j+1}/K,1},G_{M_{j+1}/K,l}]\subset G_{M_{j+1}/K,l+2}=\{1\},
  \end{equation}
  since $l$ is the largest element of $L_{M_{j+1}/K}$. Thus, $G_{M_{j+1}/K,l}\subset Z(G_{M_{j+1}/K})$. Since $G_{M_{j+1}/K,l}$ is not trivial and $Z(G_{M_{j+1}/K})$ is isomorphic to $\mathbb{F}_p$, we have $G_{M_{j+1}/K,l}=Z(G_{M_{j+1}/K})$.
\end{proof}

Recall $A\in M(n),\;R(t,x)\in\mathbb{F}_p[t,x],\;\mathbf{A}:\mathbb{G}_a\to GL(n)$ of Definition \ref{dfb}(\ref{dfb-1}).

\begin{lemm}\label{li}
Assume that $K$ is of characteristic $p>0$ and $M_n/K$ is totally ramified. There exist $a\in K,\;b\in K^n$ satisfying the conditions of Lemma \ref{lag}(\ref{lag-1}) and conditions (\ref{li-2})--(\ref{li-3}) below. Let
\begin{equation}\label{ev}
b=
\begin{pmatrix}
  b_n\\
  \vdots\\
  b_1
\end{pmatrix},
\end{equation}
and let $m_a,m_j\;(1\leq j\leq n)$ denote $-v_K(a), -v_K(b_j)$ respectively.  
\begin{enumerate}[(i)]
\item\label{li-2} $m_a,m_1$ are positive and prime to $p$.
\item\label{li-1} For all $2\leq j\leq n$, we have $p\nmid m_j$ if $m_j>0$.
\item\label{li-3} If $n\leq p-1$ and $m_a=m_1$, then the images $\bar{a},\bar{b}_1$ of $a,b_1$ in $\Gr_{m_a}K=\Gr_{m_1}K$ respectively are linearly independent over $\mathbb{F}_p$.
  \end{enumerate}
\end{lemm}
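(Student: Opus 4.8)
The plan is to start from $a\in K$, $b\in K^n$ as provided by Lemma \ref{lag}(\ref{lag-1}) and to modify the pair $(a,b)$ successively, each modification preserving the conclusion of Lemma \ref{lag}(\ref{lag-1}) — and hence the extension $M_n/K$ with its tower $M_1\subset\cdots\subset M_n$ — until (\ref{li-2})--(\ref{li-3}) hold. Write $T=\begin{pmatrix}\mathbf{A}(a)&b\\0&1\end{pmatrix}\in G(K)$. Three kinds of modifications are available. \emph{(A)} Proposition \ref{lk}(\ref{lk-4}), applied with $B=C=G(K_s)$, $\Gamma=G_K$, $f=F$, $g=\operatorname{id}$ and $y=\begin{pmatrix}\mathbf{A}(u)&0\\0&1\end{pmatrix}\in G(K)$, lets us replace $(a,b)$ by $(a+P(u),\,\mathbf{A}(u^p)b)$ for any $u\in K$; this leaves the bottom component $b_1$ unchanged. \emph{(B)} Taking $y=\begin{pmatrix}I&w\\0&1\end{pmatrix}\in G(K)$ with $w\in K^n$ we may replace $b$ by $b+F(w)-\mathbf{A}(a)w$ and keep $a$; writing the entries of $w$ as $w_n,\dots,w_1$ (top to bottom, as for $b$ in (\ref{ev})) this reads $b_j\mapsto b_j+P(w_j)-\sum_{i=1}^{j-1}\binom{a}{i}w_{j-i}$, so if $w_1=\cdots=w_{j-1}=0$ it fixes $b_1,\dots,b_{j-1}$, sends $b_j\mapsto b_j+P(w_j)$, and perturbs only $b_{j+1},\dots,b_n$. \emph{(C)} For any automorphism $\Psi$ of the algebraic group $G$ over $\mathbb{F}_p$ we may replace $T$ by $\Psi(T)\in G(K)$: since $\Psi$ commutes with $F$ and with the $G_K$-action, $\delta(\Psi(T))$ is the class of $(\Psi\circ\phi)\circ\pi$, with $\Psi\circ\phi$ again an isomorphism $G_{M_n/K}\to G(\mathbb{F}_p)$, and $\Psi$ preserves the lower central series, so the tower is unchanged.

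\emph{Conditions (\ref{li-2}) and (\ref{li-1}).} Since $M_n/K$ is totally ramified, so are $K(\alpha)/K$ and $K(\gamma_1)/K$, where $\alpha^p-\alpha=a$ and $\gamma_1^p-\gamma_1=b_1$ — the latter is the bottom entry of the relation $F(\gamma)-\gamma=\mathbf{A}(-\alpha^p)b$ of Lemma \ref{lag}(\ref{lag-2}), $\mathbf{A}$ being upper unitriangular. Neither is trivial, for otherwise $M_1=K(\alpha,\gamma_1)$ would have degree $<p^2$ over $K$, contradicting $G_{M_1/K}\cong\mathbb{F}_p^2$ (as $M_1$ corresponds to $Z_1G$ and $G/Z_1G\cong{\mathbb{G}_a}^2$). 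By the classical normalization of Artin--Schreier elements over a complete discretely valued field with perfect residue field, first using (A) we arrange that $m_a:=-v_K(a)$ is positive and prime to $p$; then, applying (B) to $b_1$, then $b_2$, \dots, then $b_n$ (in that order, so that each step leaves the components already treated untouched), we arrange that $m_1:=-v_K(b_1)$ is positive and prime to $p$ and that $p\nmid m_j$ whenever $m_j:=-v_K(b_j)>0$ for $2\le j\le n$. The (B)-steps do not affect $a$. This gives (\ref{li-2}) and (\ref{li-1}).

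\emph{Condition (\ref{li-3}).} Assume $n\le p-1$ and suppose that after the above normalization $m_a=m_1$ while $\bar a,\bar b_1\in\Gr_{m_a}K$ are $\mathbb{F}_p$-linearly dependent; since both are nonzero, $\bar b_1=c\bar a$ for some $c\in\mathbb{F}_p^\times$. Because $n\le p-1$, the binomial polynomials $\binom{x}{j}$ lie in $\mathbb{F}_p[x]$ for $1\le j\le n$, and by the Vandermonde identity $\sum_{i=0}^{j}\binom{x}{i}\binom{x'}{j-i}=\binom{x+x'}{j}$ the map $\Psi$ sending $\begin{pmatrix}\mathbf{A}(x)&y\\0&1\end{pmatrix}$ to $\begin{pmatrix}\mathbf{A}(x)&y'\\0&1\end{pmatrix}$ with $y'_j=y_j-c\binom{x}{j}$ $(1\le j\le n)$ is a group homomorphism; it is bijective (its inverse is the same formula with $c$ replaced by $-c$), hence an automorphism of $G$ over $\mathbb{F}_p$, i.e.\ a modification of kind (C). Applying $\Psi$ to $T$ fixes $a$ and replaces each $b_j$ by $b_j-c\binom{a}{j}$; in particular $b_1$ becomes $b_1-ca$, whose image in $\Gr_{m_a}K$ is $\bar b_1-c\bar a=0$, so $-v_K(b_1-ca)<m_a$. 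Re-running the (B)-normalization on $b_1,b_2,\dots,b_n$ — which never alters $a$ and, after $b_1$ is treated, never alters $b_1$ again — restores (\ref{li-2}) and (\ref{li-1}) while keeping $-v_K(b_1)<m_a$; thus $m_a\ne m_1$ and (\ref{li-3}) holds vacuously.

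The main obstacle is the construction of $\Psi$. This ``shear'' is what decouples $a$ from $b_1$, and it exists over $\mathbb{F}_p$ precisely when $n\le p-1$: the cocycle identity $\psi(x+x')=\psi(x)+\mathbf{A}(x)\psi(x')$ that makes a shear $y\mapsto y+\psi(x)$ on the $y$-block a homomorphism of $G$ forces, after iterating $p$ times at $x=x'=1$ and using $\sum_{l=0}^{p-1}\mathbf{A}(l)=A^{p-1}$ together with $\psi(p\cdot1)=\psi(0)=0$, the relation $A^{p-1}\psi(1)=0$; and $A^{p-1}=0$ exactly when $n\le p-1$, whereas for $n=p$ one has $A^{p-1}\ne 0$ with the $y_1$-direction not in $\ker A^{p-1}$ — which is why (\ref{li-3}) is imposed only for $n\le p-1$. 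Everything else is the standard Artin--Schreier normalization, the sole subtlety being the order $b_1,b_2,\dots,b_n$ in which the components of $b$ are treated, dictated by the formula in (B).
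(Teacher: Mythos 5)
Your proposal is correct and takes essentially the paper's approach: your moves (A) and (B) are exactly the paper's operations (I) and (II) (including the crucial order $b_1,\dots,b_n$, the paper's Fact (*)), positivity of $m_a,m_1$ comes from total ramification just as in the paper, and condition (\ref{li-3}) is handled by subtracting $c\,\mathbf{v}(a)$ from $b$, where $\mathbf{v}(x)=\left(\binom{x}{j}\right)_j$ satisfies the Vandermonde cocycle identity $\mathbf{v}(x+x')=\mathbf{v}(x)+\mathbf{A}(x)\mathbf{v}(x')$, followed by re-running (B), which does not increase $m_1$ --- precisely the paper's operation (III). The only divergence is the justification of that last move: the paper applies Proposition \ref{lk}(\ref{lk-4}) once more with translation part $i\mathbf{v}(\alpha)\in{M_n}^n$ and the identity (\ref{ex}), whereas you realize the same replacement as post-composition with the $\mathbb{F}_p$-automorphism $\Psi\colon y\mapsto y-c\,\mathbf{v}(x)$ of $G$ (your shear is the paper's $\mathbf{v}$ in disguise); both rest on the same identity and need $n\leq p-1$, so this is a clean equivalent variant rather than a different route.
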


\begin{proof}
  Take $a\in K,\;b\in K^n$ as in Lemma \ref{lag}(\ref{lag-1}).
 By Proposition \ref{lk}(\ref{lk-4}), we may replace
  \[
\begin{pmatrix}
  \mathbf{A}(a)&b\\
  0&1
\end{pmatrix}
\]
by
\[
\begin{pmatrix}
  \mathbf{A}(s^p)&F(t)\\
  0&1
\end{pmatrix}
\begin{pmatrix}
  \mathbf{A}(a)&b\\
  0&1
\end{pmatrix}
\begin{pmatrix}
  \mathbf{A}(s)&t\\
  0&1
\end{pmatrix}^{-1}
\]
\begin{equation}
=
\begin{pmatrix}
  \mathbf{A}(a+P(s))&\mathbf{A}(s^p)b-\mathbf{A}(a+P(s))t+F(t)\\
  0&1
\end{pmatrix}
\end{equation}
for $s\in K,\;t\in {M_n}^n$, if $\mathbf{A}(s^p)b-\mathbf{A}(a+P(s))t+F(t)\in K^n$.

When $n\leq p-1$, let
\begin{equation}
  S(t,x)=\frac{R(t,x)-1}{t}=\sum_{i=0}^{p-2}\binom{x}{i+1}t^i\in\mathbb{F}_p[t,x],
  \end{equation}
  and define a morphism $\mathbf{v}:\mathbb{G}_a\to {\mathbb{G}_a}^n$ of algebraic varieties by
\begin{equation}
\mathbf{v}(x)=S(A,x)
\begin{pmatrix}
  0\\
  \vdots\\
  0\\
  1
\end{pmatrix}\in {\mathbb{G}_a}^n.
\end{equation}
For any $x,y\in\mathbb{G}_a$, we have 
\begin{equation}
  R(A,x+y)=R(A,x)R(A,y)
\end{equation}
by the Chu--Vandermonde identity and $A^p=0$.
Thus, the morphism $\mathbf{v}$ satisfies
  \begin{equation}
  \mathbf{v}(x+y)=\mathbf{v}(x)+\mathbf{A}(x)\mathbf{v}(y).
  \end{equation}
  In particular, we have
  \begin{equation}\label{ex}
  \mathbf{v}(x^p)-\mathbf{A}(P(x))\mathbf{v}(x)=F(\mathbf{v}(x))-\mathbf{A}(P(x))\mathbf{v}(x)=\mathbf{v}(P(x)).
  \end{equation}

  We have the following fact.
  
  \begin{enumerate}[\textbf{Fact (*)}]
  \item For any
  \begin{equation}
t=\begin{pmatrix}
t_n\\
\vdots\\
t_1
\end{pmatrix}\in K^n,
\end{equation}
  the first component from the bottom of $(1-\mathbf{A}(a))t$ is $0$, and the $j$-th component from the bottom of $(1-\mathbf{A}(a))t$ depends only on $t_1,\ldots,t_{j-1}$ for all $2\leq j\leq n$.
  \end{enumerate}
  We may define operations (\ref{li-op-1}), (\ref{li-op-2}) and (\ref{li-op-3}) as follows.
\begin{enumerate}[(I)]
\item\label{li-op-1} Choose appropriate $s\in K$, set $t=0$, and replace $(a,b)$ by $(a+P(s), \mathbf{A}({s}^p)b)$, so that we have $p\nmid m_a$ if $m_a>0$. We know by Lemma \ref{le} that such $s$ exists.
\item\label{li-op-2} Choose appropriate $t_j\in K\;(1\leq j\leq n)$ successively, set $s=0$, and replace $(a,b)$ by $(a,b+(1-\mathbf{A}(a))t+(F(t)-t))$, so that we have $p\nmid m_j$ if $m_j>0$ for all $1\leq j\leq n$. We know by Lemma \ref{le} and Fact (*) that such $t_j\;(1\leq j\leq n)$ exist.
\item\label{li-op-3} Assume $n\leq p-1$, $m_a=m_1$, and $i=\bar{a}^{-1}\bar{b}_1\in\mathbb{F}_p$. Take $\alpha\in M_n$ such that $\alpha^p-\alpha=a$ as in Lemma \ref{lag}(\ref{lag-2}). Set $s=0,\;t=i\mathbf{v}(\alpha)$, and replace $(a,b)$ by $(a,b-i\mathbf{v}(a))$, so that we have $m_a> m_1$. This is valid since we have
  \begin{equation}
    -\mathbf{A}(a)\mathbf{v}(\alpha)+F(\mathbf{v}(\alpha))=\mathbf{v}(a)
  \end{equation}
  by (\ref{ex}).
\end{enumerate}

Define $\alpha, \gamma_j\in M_n\;(1\leq j\leq n)$ as in Lemma \ref{lag}(\ref{lag-2}). Since $M_n$ is totally ramified, $K(\alpha),K(\gamma_1)$ are ramified. By Proposition \ref{ls}(\ref{ls-1}), we have $m_a,m_1>0$.

Performing (\ref{li-op-1}) and (\ref{li-op-2}) successively, conditions (\ref{li-2}) and (\ref{li-1}) are satisfied.

Suppose $n\leq p-1$, $m_a=m_1$ and $\bar{a}^{-1}\bar{b}_1\in\mathbb{F}_p$. Performing (\ref{li-op-3}), condition (\ref{li-3}) is satisfied. However, since (\ref{li-op-3}) may change $m_j\;(1\leq j\leq n)$, we have to perform (\ref{li-op-2}) again to ensure conditions (\ref{li-2}) and (\ref{li-1}) are satisfied. Since (\ref{li-op-2}) does not make $m_1$ larger, condition (\ref{li-3}) remains satisfied.  
\end{proof}

Take $a\in K,\;b\in K^{n}$ satisfying the conditions of Lemma \ref{li}. Define $r_j\;(1\leq j\leq n)$ as in Lemma \ref{lad}.

 Let
  \begin{equation}\label{ej}
 \omega=
  \begin{pmatrix}
    {\omega_n}\\
    \vdots\\
    {\omega_1}
  \end{pmatrix}=\mathbf{A}(-a)db\in (\Omega_{K/k}^1)^n,
  \end{equation}
  where $db$ denotes the component-wise derivation.

  We will now state our main theorem.

  \begin{theo}\label{ta}
    Let $K$ be a complete discrete valuation field of equal characteristic $p>0$. Assume that the residue field $k$ of $K$ is perfect. Let $v_K$ denote the valuation of $K$ and $\Omega_{K/k}^1$ defined at the beginning of Section \ref{sb}. Take $K\subset M_1\subset\cdots\subset M_n$ as in Definition \ref{dfd}. Assume that $M_n/K$ is totally ramified. Take $a\in K,\;b\in K^n$ satisfying the conditions of Lemma \ref{li}. Define $b_1,\ldots,b_n\in K$ as in (\ref{ev}). Let $m_a=-v_K(a),\;m_j=-v_K(b_j)\;(1\leq j\leq n)$. Define the sequence $m_a\leq r_1<\cdots<r_n$ as in Lemma \ref{lad}, and $\omega_j\;(1\leq j\leq n)$ as in (\ref{ej}). Then we have
  \begin{equation}\label{ez}
  r_{j}=\max\left(\max_{1\leq i\leq j}\left(\frac{j-i}{p}m_a-v_K\left(\omega_i\right)\right),\frac{(j+p-2)m_a+m_1}{p}\right)
  \end{equation}
  for all $2\leq j\leq n$.
  \end{theo}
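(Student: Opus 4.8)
The strategy is to reduce the computation of $r_j$ to a recursive analysis of the $n$ Artin–Schreier extensions composing $M_n/K(\alpha)$, and to track everything through the graded modules $\Gr_\bullet K$, $\Gr_\bullet\Omega^1_K$, $\Gr_\bullet H^1(K)$ of Section \ref{sb}. By Lemma \ref{lad}, $r_j$ is the largest jump in $U_{M_j/K}$, and by Lemma \ref{lag}(\ref{lag-2}) we have $M_j=K(\alpha,\gamma_1,\dots,\gamma_j)$ with $\alpha^p-\alpha=a$ and $F(\gamma)-\gamma=\mathbf A(-\alpha^p)b=c$. Since $M_n/K$ is totally ramified and $p\nmid m_a$, the extension $L:=K(\alpha)=M$ for the bottom layer has conductor $m_a$, ramification index $p$, and different valuation $(m_a+1)(p-1)$ by Proposition \ref{ls}. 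Over $L$, each layer $M_j=M_{j-1}(\gamma_j')$ (after a suitable additive correction $\gamma_j\mapsto\gamma_j-(\text{Artin–Schreier coboundary})$) is an Artin–Schreier extension defined by $P(x)=c_j''$ for some $c_j''\in M_{j-1}$, where $c=(c_n,\dots,c_1)^T$ and $c_j''$ is obtained from $c_j$ by successively subtracting $p$-th-power corrections forced by the lower layers (this is exactly the $c_n',c_n''$ language of the introduction, applied layer-by-layer). The key relation is $\psi_{L/K}(i)=\max(i,pi-(p-1)m_a)$ from Proposition \ref{ls}(\ref{ls-3}), together with Lemma \ref{lab} which lets us pass ramification-jump information between $M_j/K$, $M_j/L$, and $M_j/M_{j-1}$ via composition of Herbrand functions.

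**Key steps in order.**
First I would set up the bottom layer: compute $v_L$ of a uniformizer, the different $(m_a+1)(p-1)$, and the constant $\theta\in\Gr_{p-1-(m_a+1)(p-1)}L\simeq\Gr_{(1-p)m_a-1+ \dots}L$ governing the transition maps in Lemmas \ref{lg} and Proposition \ref{ls}(\ref{ls-2}). Second, using $\omega=\mathbf A(-a)db$, I would translate $-v_K(\omega_i)$ into a statement about $-v_L$ of the relevant logarithmic differentials $t\,dc_i/dt$ upstairs. This is where the combinatorial Lemma \ref{laf} (the partial-fractions identity for products $\prod y/(x+(i-1)y)$) and Lemma \ref{lah} (the inductive inequality-to-equality bootstrap with the constraints involving $-(p-1)\lambda+p\lambda_i$) enter: they encode how the valuations $\lambda_i$ of the $c_i$'s (and of their derivatives) are forced into arithmetic progression $\lambda_i=(i-1)\lambda+\lambda_1$ with common difference $\lambda=$ (something proportional to $m_a$), under the hypotheses (\ref{li-2})--(\ref{li-3}) of Lemma \ref{li}. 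Third, I would prove the corollary mentioned in the introduction — call it Corollary \ref{ra} — expressing $s_j:=\max(-v_L(t\,dc_j/dt),-v_L(t\,dc_j''/dt))$ in closed form in terms of the $m_i$'s and $-v_K(\omega_i)$'s, via the transition formula $-v_L(\chi)=p(n_\chi+1)-(m_a+1)(p-1)-1$ of Lemma \ref{lg}. Fourth, the heart: show that the a priori inequality $m_j'=-v_L(t\,dc_j'/dt)\le s_j$ (where $c_j'$ is the "reduced" Artin–Schreier generator with $-v_L(c_j')=m_j'$ the conductor of $M_j/M_{j-1}$ over $L$) is an equality. This is done by computing the image of $c_j'$ in $\Gr_{s_j}H^1(M_{j-1})$ using the explicit maps $u$ and $\nu$ of Proposition \ref{ls}(\ref{ls-2}) iterated over the layers $L\subset M_1\subset\cdots\subset M_{j-1}$, and checking this image is nonzero — nonvanishing follows from the linear-independence hypothesis (\ref{li-3}) of Lemma \ref{li} exactly in the boundary case where the two terms in the max could collide. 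Finally, once $m_j'$ is known, I would convert back: $r_j=\psi_{L/K}(\psi_{M_{j-1}/L}(\dots))$ applied to the conductor $m_j'$, or more directly use Lemma \ref{lab} and Proposition \ref{ls}(\ref{ls-1}) inductively to get that the largest jump of $M_j/K$ equals $\psi_{L/K}$ of the largest jump of $M_j/L$, which unwinds to the stated formula $r_j=\max(\max_i(\tfrac{j-i}{p}m_a-v_K(\omega_i)),\,\tfrac{(j+p-2)m_a+m_1}{p})$. The second term is precisely $\tfrac1p$ times the conductor of the topmost layer when that layer "inherits" the slope $m_a$ from the abelian part, i.e. when $-v_L(c_j'')$ dominates; the first term is the contribution of $dc_i$ propagated up and rescaled by the Herbrand function.

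**Main obstacle.**
The hard part will be Step four: proving the inequality $m_j'\le s_j$ is an equality. An inequality is cheap — differentiating $c_j-{c_j''}^p+c_j''=c_j'$ gives $dc_j'=dc_j - dc_j''$ (the $p$-th power kills its derivative), so $-v_L(t\,dc_j'/dt)\le\max$ of the two, i.e. $m_j'\le s_j$. Equality can fail only if the leading terms of $t\,dc_j/dt$ and $t\,dc_j''/dt$ cancel in $\Gr_{s_j}\Omega^1_L$; ruling this out requires showing that $c_j'$ has nonzero image in $\Gr_{s_j}H^1(M_{j-1})$, which forces a careful bookkeeping of which layer contributes the dominant valuation and a genuine use of hypothesis (\ref{li-3}) (otherwise one really can lose a jump, as in a degenerate Artin–Schreier situation where $a$ and $b_1$ are $\mathbb F_p$-proportional). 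Controlling the iterated transition maps $u$ of Proposition \ref{ls}(\ref{ls-2}) through all $j-1$ intermediate layers — keeping track of the accumulating factors $\tfrac{n}{m_a}\theta$ versus the Frobenius $\sqrt[p]{\cdot}$ at each stage, and knowing at which stage we are in the $n<m_a$, $n=m_a$, or $n>m_a$ regime — is the real technical labor, and is exactly what Lemmas \ref{lah} and \ref{laf} are designed to streamline.
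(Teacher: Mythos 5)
Your skeleton matches the paper's: reduce $r_j$ (largest jump, Lemma \ref{lad}) through the Herbrand function of $L=K(\alpha)$ to the conductor $m'_j$ of the Artin--Schreier class of $c_j$ over $L$, bound $m'_j$ by $\max(-v_L(dc_j),-v_L(dc''_j))$ since $dc'_j=dc_j+dc''_j$ (note the sign: the relation $c_j-{c''_j}^p+c''_j=c'_j$ gives a plus, not the minus you wrote, and the relative sign matters later when you match the would-be cancellation against the kernel of $u$), and rule out cancellation by a nonvanishing computation in the graded module, with hypothesis (\ref{li-3}) of Lemma \ref{li} entering exactly at the collision. This much is the paper's proof.

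However, your plan for the decisive step contains a genuine misconception. You propose to treat $M_j/M_{j-1}$ as defined by an element of $M_{j-1}$ obtained by ``corrections forced by the lower layers,'' and to verify nonvanishing of the image of $c'_j$ in $\Gr_{s_j}H^1(M_{j-1})$ by iterating the transition map $u$ of Proposition \ref{ls}(\ref{ls-2}) through all the intermediate fields $L\subset M_1\subset\cdots\subset M_{j-1}$. The whole point of the paper's setup is that this is unnecessary: $c=\mathbf{A}(-\alpha^p)b$ lies in $L^n$, so $c_j\in L$ already, $c''_j$ is a single correction inside $L$ (not layer-by-layer), and Lemma \ref{lac}(\ref{lac-2}) identifies $m'_j=\psi_{L/K}(r_j)$ with the conductor over $L$ of the character defined by $c_j$, because $\psi_{L/K}(r_j)$ exceeds every element of $U_{M_{j-1}/L}$ and $M_j=M_{j-1}(\gamma_j)$. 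Hence the entire graded computation takes place in $\Gr\Omega_L^1$ and $\Gr H^1(L)$ with one application of Proposition \ref{ls}(\ref{ls-2}); no control of $\Gr H^1(M_i)$ or of the Herbrand functions of the intermediate layers is needed (your iterated-$u$ route would force you to carry exactly the ramification data being computed, at best via an extra induction). Moreover, your proposal leaves out the mechanism that actually makes equality provable in the collision case: when $-v_L(dc_n)=-v_L(dc''_n)$, Lemma \ref{laee} puts you in the hypothesis of Proposition \ref{lae}, and a congruence argument mod $p$ (using Lemma \ref{lah} and condition (\ref{li-1})) forces the dominant index in (\ref{z}) to be $i=1$, $m_a=m_1$, $m_j=jm_a$, and $n\le p-1$; only then are the leading terms of $dc_n$ and $dc''_n$ explicitly $-m_1\frac{(-\sqrt[p]{\bar a})^{n-1}}{(n-1)!}\mu(\theta\bar b_1)$ and $-m_1\frac{(-\sqrt[p]{\bar a})^{n-1}}{(n-1)!}\mu\left(\sqrt[p]{\bar b_1}\right)$, so that their sum is a nonzero multiple of $\mu(u(\bar b_1))$ and nonvanishing reduces, via the description of $\ker u$ as generated by the class of $a$, to the linear independence of $\bar a$ and $\bar b_1$. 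Without this reduction your ``careful bookkeeping'' of regimes of $u$ across layers has no concrete handle, so as it stands the crux of the argument is missing.
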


  We will prove this theorem at the end of this paper.

   Define $\alpha,\gamma_j\;(1\leq j\leq n)$ as in Lemma \ref{lag}(\ref{lag-2}) and define $L$ by $L=K(\alpha)$. Let
  \begin{equation}\label{ead}
    c=\begin{pmatrix}
    c_n\\
    \vdots\\
    c_1
    \end{pmatrix}
    =\mathbf{A}(-\alpha^p)b,
  \end{equation}
  and let $m'_j\;(2\leq j\leq n)$ denote the smallest integer such that $c_j\in L$ defines an element of $F_{m'_j}H^1(L)$. Then we have $P(\gamma_j)=c_j$ and $M_j=K(\alpha,\gamma_1,\ldots,\gamma_j)$ for all $1\leq j\leq n$ by Lemma \ref{lag}(\ref{lag-2}).

  \begin{lemm}\label{lac}
  \begin{enumerate}[(a)]
    \item\label{lac-1} Let $2\leq j\leq n$. We have
  \begin{equation}\label{y}
  -v_L(c_j)\leq p\max_{1\leq i\leq j}((j-i)m_a+m_i)
  \end{equation}
  and
  \begin{equation}\label{z}
  -v_L(dc_j)=\max_{1\leq i\leq j}((j-i)m_a-v_L(\omega_i)).
  \end{equation}
  Furthermore, there exists a unique $1\leq i\leq j$ satisfying
  \begin{equation}\label{za}
    -v_L(dc_j)=(j-i)m_a-v_L(\omega_i).
  \end{equation}
  For this unique $i$, the image of $dc_j$ in $\Gr_{-v_L(dc_j)}\Omega_{L/k}^1$ equals that of $\displaystyle{\binom{-\alpha}{j-i}\omega_i}$.
\item\label{lac-2} We have $p\nmid m'_j=\psi_{L/K}(r_j)$ for $2\leq j\leq n$, and $m_a<m'_2<\cdots<m'_n$.
  \end{enumerate}
\end{lemm}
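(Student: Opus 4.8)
The plan is to unravel the definition $c_j = \big(\mathbf{A}(-\alpha^p) b\big)_j$ explicitly. Since $\mathbf{A}(x) = R(A,x) = \sum_{i=0}^{p-1}\binom{x}{i}A^i$ and $A = (\delta_{i,j-1})$, the matrix $A^i$ shifts coordinates, so the $j$-th component (counting from the bottom, matching the indexing of the Remark after Definition \ref{dfb}) comes out as a finite sum
\begin{equation}
c_j = \sum_{i=1}^{j}\binom{-\alpha^p}{j-i}b_i.
\end{equation}
First I would verify this combinatorial identity carefully, keeping track of the bottom-up indexing convention; this is the bookkeeping backbone of everything that follows. Note $\binom{-\alpha^p}{j-i}$ is a polynomial in $\alpha^p$ of degree $j-i\le n-1\le p-1$, so all binomial coefficients make sense in characteristic $p$.

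For part (\ref{lac-1}), the valuation bound \eqref{y} is then immediate from the ultrametric inequality, using $-v_L(\alpha) = -v_L(\alpha^p) \cdot \tfrac1p$... more precisely, since $L = K(\alpha)$ with $L/K$ the Artin--Schreier extension $P(x)=a$, we have $e_{L/K}=p$, $v_L|_K = p\,v_K$, and $-v_L(\alpha) = -v_L(\alpha^p) = m_a$ (this uses $p\nmid m_a$ from Lemma \ref{li}(\ref{li-2})). Hence $-v_L\big(\binom{-\alpha^p}{j-i}b_i\big) \le p\big((j-i)m_a + m_i\big)$, giving \eqref{y}. For \eqref{z}, differentiate termwise: $d c_j = \sum_{i} \binom{-\alpha^p}{j-i} d b_i + \sum_i \Big(d\binom{-\alpha^p}{j-i}\Big) b_i$. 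The key point is that $\omega = \mathbf{A}(-a)\,db$ on the base, and after passing to $L$ we want to re-express $dc_j$ through the $\omega_i$; I expect a clean identity of the shape $dc_j = \sum_{i=1}^{j}\binom{-\alpha}{j-i}\omega_i$ to hold, modulo terms of strictly smaller valuation. Establishing this requires the functional equation $\mathbf{v}(x+y) = \mathbf{v}(x) + \mathbf{A}(x)\mathbf{v}(y)$ and its consequence \eqref{ex} from the proof of Lemma \ref{li}, together with the chain rule applied to $\alpha^p - \alpha = a$, i.e. $d(\alpha^p) = d\alpha$ (since $d(\alpha^p)=p\alpha^{p-1}d\alpha=0$ in characteristic $p$), wait---rather $d(\alpha^p - \alpha) = -d\alpha = da$, so $d\alpha = -da$. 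This lets one convert $d\mathbf{A}(-\alpha^p) = d\mathbf{A}(-\alpha)$-type expressions back to base differentials. Then \eqref{z}, the uniqueness of the maximizing $i$, and the identification of the leading term of $dc_j$ in $\Gr_{-v_L(dc_j)}\Omega_L^1$ all follow: uniqueness is where Lemma \ref{lah} enters --- the hypotheses (\ref{lah-2})--(\ref{lah-4}) are engineered so that once two of the quantities $(j-i)m_a - v_L(\omega_i)$ tie for the max, the whole sequence is forced into an arithmetic progression, which combined with conditions (\ref{li-1}), (\ref{li-3}) of Lemma \ref{li} on the $m_j$ being prime to $p$ produces a contradiction with $p\nmid$ the relevant valuations.

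For part (\ref{lac-2}): first, $m'_j = \psi_{L/K}(r_j)$ is essentially Lemma \ref{lab} applied to the tower $M_j \supset M_1 = K(\alpha) \supset K$ --- wait, $M_1$ is the fixed field of $Z_1 G$, and $K(\alpha)$ is also a degree-$p$ subextension; since $G(\mathbb{F}_p)$ has a unique... actually $K(\alpha)$ corresponds to the quotient $G \to G/Z_1G \cong \mathbb{G}_a$ via $x$, so $K(\alpha) = M_1$... hmm, one should check $M_1 = K(\alpha)$ directly from Lemma \ref{lag}(\ref{lag-2}) with $j=1$: yes, $M_1 = K(\alpha,\gamma_1)$, not $K(\alpha)$. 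Let me instead take $L = K(\alpha)$ throughout as the paper does; then $M_j/L$ is a composite of Artin--Schreier extensions and $G_{M_j/L}$ is the image of $\psi$-indexed ramification. The equality $m'_j = \psi_{L/K}(r_j)$ should follow because $r_j$ is the top jump of $M_j/K$ (Lemma \ref{lad}), $\psi_{L/K}$ translates it to the top jump of $M_j/L$ over $L$, and since $M_j/M_{j-1}$ is the Artin--Schreier extension $P(x) = c_j$, Proposition \ref{ls}(\ref{ls-1}) identifies that top jump with $m'_j$ provided $M_j/M_{j-1}$ contributes the new ramification --- which it does by the maximality in Lemma \ref{lad}. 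That $p \nmid m'_j$ then comes from $\psi_{L/K}(r_j) \equiv $ (by Proposition \ref{ls}(\ref{ls-3}), $\psi_{L/K}(i) = \max(i, pi - (p-1)m_a)$, and the relevant branch gives something $\equiv -(p-1)m_a \equiv m_a \pmod p$, which is prime to $p$). The chain $m_a < m'_2 < \cdots < m'_n$ follows from $r_1 < r_2 < \cdots < r_n$ (Lemma \ref{lad}) together with $\psi_{L/K}$ being strictly increasing, plus $m'_2 > m_a$ because the extension $M_2/M_1$ genuinely ramifies above $L$.

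I expect the main obstacle to be part (\ref{lac-1})'s identity $dc_j = \sum_{i=1}^j \binom{-\alpha}{j-i}\omega_i$ up to lower-order terms --- specifically, showing that the "extra" terms $\big(d\binom{-\alpha^p}{j-i}\big)b_i$ coming from differentiating the binomial coefficients reassemble (via the functional equation for $\mathbf{v}$) into exactly the shape of the claimed sum rather than producing genuinely new contributions of competing valuation. The cleanest route is probably to differentiate the defining matrix equation \eqref{ea} directly: from $\mathbf{A}(x^p)F(y)\cdots$ one gets a relation among $d\alpha, d\gamma, da, db$, and then project to the $j$-th component and use $d\alpha = -da$ wait $d\alpha = da$ isn't right either since $d(\alpha^p-\alpha)=da$ and $d(\alpha^p)=0$ give $d\alpha = -da$. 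In any case it is a matrix-differential computation, delicate but mechanical, and Lemma \ref{laf} is presumably the identity that makes the coefficients collapse correctly.
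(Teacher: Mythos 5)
Your decomposition $c_j=\sum_{i=1}^j\binom{-\alpha^p}{j-i}b_i$ and the bound (\ref{y}) are fine, but the heart of part (\ref{lac-1}) has a gap. You single out the terms $\bigl(d\binom{-\alpha^p}{j-i}\bigr)b_i$ as the ``main obstacle'' and propose to control them only ``modulo terms of strictly smaller valuation'', via the functional equation for $\mathbf{v}$ and Lemma \ref{laf}. In fact these terms vanish identically: each entry of $\mathbf{A}(-\alpha^p)$ is a polynomial in $\alpha^p$ and $d(\alpha^p)=0$, so $dc=\mathbf{A}(-\alpha^p)\,db=\mathbf{A}(-\alpha)\mathbf{A}(-a)\,db=\mathbf{A}(-\alpha)\omega$ exactly; the paper phrases this as $c=\mathbf{A}(-a-\alpha)b$ and $dc=\mathbf{A}(-a-\alpha)db$ because $d(a+\alpha)=d(\alpha^p)=0$. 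Neither $\mathbf{v}$ nor Lemma \ref{laf} plays any role here (Lemma \ref{laf} is used only in Proposition \ref{lae}(\ref{lae-3})). The more serious gap is the uniqueness of the maximizing $i$ in (\ref{za}) and the identification of the leading term of $dc_j$: you attribute this to Lemma \ref{lah} (``a tie forces an arithmetic progression, contradiction''), but Lemma \ref{lah} has a different hypothesis--conclusion structure and yields no such statement, and you never exhibit the claimed contradiction. The actual argument is a one-line congruence: by Lemma \ref{lg} and Proposition \ref{ls}(\ref{ls-4}), $-v_L(\omega_i)=-pv_K(\omega_i)-(p-1)m_a\equiv m_a\pmod p$, hence $-v_L\bigl(\binom{-\alpha}{j-i}\omega_i\bigr)\equiv(j-i+1)m_a\pmod p$; since $p\nmid m_a$ and $1\leq j-i+1\leq n\leq p$, these residues are pairwise distinct, so no two terms of $dc_j=\sum_i\binom{-\alpha}{j-i}\omega_i$ share a valuation. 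This gives (\ref{z}), the uniqueness, and the leading-term statement at once; Lemma \ref{lah} enters only later, in Proposition \ref{lae}(\ref{lae-1}).

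Part (\ref{lac-2}) of your sketch follows the paper's route (Lemma \ref{lab} combined with Lemma \ref{lad}, plus strict monotonicity of $\psi_{L/K}$ and $r_1\geq m_a$), but your derivation of $p\nmid m'_j$ from $\psi_{L/K}(r_j)\equiv m_a\pmod p$ tacitly assumes $r_j\in\mathbb{Z}$, which is not known at this point since $G_{M_j/K}$ is non-abelian and Hasse--Arf is unavailable. The paper instead deduces $p\nmid m'_j>0$ directly from the Artin--Schreier reduction (\ref{eaa}): since $c_j\notin P(L)$ and the residue field is perfect, a representative of conductor divisible by $p$ could be improved by subtracting $P$ of a suitable element. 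Also, your intermediate assertion $-v_L(\alpha)=-v_L(\alpha^p)=m_a$ is a slip ($-v_L(\alpha^p)=pm_a$), though the bound you then use is the correct one.
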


\begin{proof}
  (\ref{lac-1}) Since
$d(a+\alpha)=d(\alpha^p)=0$ in $\Omega_{L/k}^1$, we have
\begin{equation}
c=\mathbf{A}(-a-\alpha)b,\;dc=\mathbf{A}(-a-\alpha)db=\mathbf{A}(-\alpha)\omega.
\end{equation}
Thus we have
\begin{equation}\label{x}
  c_j=\sum_{i=1}^j\binom{-a-\alpha}{j-i}b_i
\end{equation}
and
\begin{equation}\label{u}
dc_j=\sum_{i=1}^j\binom{-\alpha}{j-i}\omega_i.
\end{equation}

We get (\ref{y}) from (\ref{x}). By Lemma \ref{lg} and Proposition \ref{ls}(\ref{ls-4}), we have
\[
-v_L\left(\binom{-\alpha}{j-i}\omega_i\right)=(j-i)m_a-v_L(\omega_i)
\]
\begin{equation}
=(j-i)m_a-pv_K(\omega_i)-(p-1)m_a\equiv(j-i+1)m_a\mod p.
\end{equation}
Therefore, the valuations of the terms in the right-hand side of the equation (\ref{u}) do not coincide with each other. Thus we have (\ref{z}) and the rest of the claim.

(\ref{lac-2}) By Lemma \ref{lab}, we have ${G_{M_j/L}}^{\psi_{L/K}(i)}={G_{M_j/K}}^{i}\cap G_{M_j/L}$ for all $i\geq -1$. Thus by Lemma \ref{lad}, $U_{M_j/L}=\{\psi_{L/K}(r_j)\}\sqcup U_{M_{j-1}/L}$. Note that $\psi_{L/K}(r_j)$ is larger than all of the elements of $U_{M_{j-1}/L}$ and that $M_j=M_{j-1}(\gamma_j)$. Hence $m'_j=\psi_{L/K}(r_j)$. We get $r_1\geq m_a$ from $U_{M_1/K}\supset U_{L/K}=\{m_a\}$ and Lemma \ref{lad}. Since we have $r_1<r_2<\cdots<r_n$ by Lemma \ref{lad}, we have $m_a<m'_2<\cdots<m'_n$. Since $c_j\notin P(L)$, by definition of $m'_j$, we have $p\nmid m'_j>0$.
\end{proof}

Take $c'_j,c''_j\in L\;(2\leq j\leq n)$ such that
\begin{equation}\label{eaa}
c_j-P(c''_j)=c'_j,\; -v_L(c'_j)=m'_j.
\end{equation}
Then we have $dc_j+dc''_j=dc'_j$, since $d({c''_j}^p)=0$.

\begin{lemm}\label{laee}
  We have
  \begin{equation}\label{el}
  -v_L(dc''_j)\leq\max_{1\leq i\leq j}((j-i)m_a+m_{i}).
  \end{equation}
\end{lemm}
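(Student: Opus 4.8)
The plan is to avoid any explicit Artin--Schreier reduction procedure and instead to deduce (\ref{el}) from a crude bound on $c''_j$ itself, exploiting the fact that $d$ cannot increase the (negative of the) valuation. Write $N=\max_{1\le i\le j}\bigl((j-i)m_a+m_i\bigr)$ for the right-hand side of (\ref{el}); note $N\ge (j-1)m_a+m_1>0$ since $m_a,m_1>0$ and $j\ge 2$.

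First I would record two preliminary estimates. By (\ref{y}) of Lemma \ref{lac}(\ref{lac-1}) we have $-v_L(c_j)\le pN$. Moreover $m'_j\le -v_L(c_j)$: the image of $c_j$ in $H^1(L)$ lies in $F_{-v_L(c_j)}H^1(L)$ by the very definition of the filtration on $H^1(L)$, so minimality of $m'_j$ forces $m'_j\le -v_L(c_j)\le pN$.

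Next I would bound $c''_j$. From $P(c''_j)=c_j-c'_j$ and the ultrametric inequality, $-v_L\bigl(P(c''_j)\bigr)\le\max\bigl(-v_L(c_j),-v_L(c'_j)\bigr)=\max\bigl(-v_L(c_j),m'_j\bigr)\le pN$. If $-v_L(c''_j)\le 0$ then $-v_L(c''_j)\le N$ holds trivially. If $-v_L(c''_j)>0$, then $v_L({c''_j}^p)=p\,v_L(c''_j)<v_L(c''_j)$, so the leading term of $P(c''_j)={c''_j}^p-c''_j$ comes from ${c''_j}^p$ and $-v_L\bigl(P(c''_j)\bigr)=p\bigl(-v_L(c''_j)\bigr)$; comparing with the previous inequality gives $-v_L(c''_j)\le N$. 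Either way $-v_L(c''_j)\le N$. Since $d$ carries $F_m L$ into $F_m\Omega_L^1$ --- this is the content of the morphism $\partial$ in Lemma \ref{lr}(\ref{lr-2}) --- we conclude $-v_L(dc''_j)\le -v_L(c''_j)\le N$, which is (\ref{el}).

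There is no genuine obstacle here: the only points needing care are the inequality $m'_j\le -v_L(c_j)$ (i.e.\ that the conductor is dominated by the naive pole order of $c_j$, which is immediate from the definitions) and the case distinction on the sign of $v_L(c''_j)$, both routine. It is worth noting that the argument in fact yields the slightly stronger statement $-v_L(c''_j)\le N$, so that passing from $c_j$ to $dc''_j$ costs nothing; this is essentially why the lemma holds and why a crude estimate suffices instead of a step-by-step analysis of the reduction.
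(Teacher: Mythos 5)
Your proof is correct and takes essentially the same route as the paper's: both rest on the chain $-v_L(dc''_j)\leq -v_L(c''_j)\leq -\tfrac{1}{p}v_L(c_j)$ combined with the bound (\ref{y}). You simply make explicit, via the ultrametric inequality applied to $P(c''_j)=c_j-c'_j$, the observation $m'_j\leq -v_L(c_j)$, and the case distinction on the sign of $v_L(c''_j)$, the middle inequality that the paper treats as immediate from the definition of $c''_j$.
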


\begin{proof}
  By definition of $c''_j$ (\ref{eaa}), we have
  \begin{equation}
  -v_L(dc''_j)\leq-v_L(c''_j)\leq-\frac{1}{p}v_L(c_j).
  \end{equation}
  By (\ref{y}), we have (\ref{el}).  
\end{proof}

Let $\bar{a},\bar{b}_j\;(1\leq j\leq n)$ denote the image of $a,b_j$ in $\Gr_{m_a}K,\Gr_{m_j}K$ respectively, when $m_j\neq-\infty$.

\begin{prop}\label{lae}
  Suppose we have
  \begin{equation}\label{et}
  -v_L(dc_n)\leq \max_{1\leq i\leq n}((n-i)m_a+m_{i}).
  \end{equation}
  \begin{enumerate}[(a)]
    \item\label{lae-1} For all $2\leq j\leq n$, We have
  \begin{equation}\label{aee}
    -v_K(\omega_j)<(j-1)m_a+m_1
  \end{equation}
\item\label{lae-2} For all $1\leq j\leq n$, we have
  \begin{equation}\label{ei}
  p\nmid m_j=(j-1)m_a+m_1>0,
\end{equation}
and
\begin{equation}\label{eq}
\bar{b}_j=\frac{m_1}{(j-1)!m_j}\bar{a}^{j-1}\bar{b}_1.
\end{equation}
\item\label{lae-5} We have $n\leq p-1$.
\item\label{lae-3} For all $1\leq j\leq n$, we have
  \begin{equation}\label{er}
  -v_L(c_j)=pm_j=p(j-1)m_a+pm_1,
  \end{equation}
  and the image of $c_j$ in $\Gr_{pm_j}L$ equals $\displaystyle{\frac{(-m_a\bar{a})^{j-1}\bar{b}_1}{\prod_{i=2}^jm_i}\neq 0}$.
\item\label{lae-4}
  For all $1\leq j\leq n$, we have
  \begin{equation}\label{es}
  -v_L(c''_j)=-v_L(dc''_j)=m_j=(j-1)m_a+m_1
  \end{equation}
  and the image of $c''_j$ in $\Gr_{m_j}L$ equals $\displaystyle{\frac{\left(-m_a\sqrt[p]{\bar{a}}\right)^{j-1}\sqrt[p]{\bar{b}_1}}{\prod_{i=2}^jm_i}}$.
  \end{enumerate}
\end{prop}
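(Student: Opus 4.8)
The plan is to deduce the whole proposition from Lemma \ref{lah}, applied with $\lambda=m_a$, with $\lambda_i=\max_{1\le l\le i}\bigl((i-l)m_a+m_l\bigr)$ for $1\le i\le n$ (so that $\lambda_1=m_1$), and with $l_i=-v_L(dc_i)$. Since $m_1>0$ by Lemma \ref{li}(\ref{li-2}), each $\lambda_i$ is a finite integer with $\lambda_i\ge (i-1)m_a+m_1>m_a$, even if some $b_l$ with $l\ge 2$ vanishes; in particular condition (\ref{lah-4}) holds. For condition (\ref{lah-3}): by Lemma \ref{lac}(\ref{lac-1}) the summands of $dc_i=\sum_{l=1}^i\binom{-\alpha}{i-l}\omega_l$ (cf.\ (\ref{u})) have pairwise distinct $L$-valuations, so $l_i$ is the largest of them; comparing term by term with $dc_n$ shows that the $l$-th summand of $dc_n$ has $L$-valuation $(n-i)m_a$ larger than that of $dc_i$, so $-v_L(dc_n)\ge (n-i)m_a+l_i$, and together with the hypothesis (\ref{et}) this gives $l_i\le -v_L(dc_n)-(n-i)m_a\le \max_{1\le l\le n}\bigl((n-l)m_a+m_l\bigr)-(n-i)m_a=-(n-i)\lambda+\lambda_n$.

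The crux is condition (\ref{lah-2}). Since $\lambda_i=\max(m_a+\lambda_{i-1},m_i)$, its hypothesis can hold only if $\lambda_i=m_i>m_a+\lambda_{i-1}$; then $m_i>0$, hence $b_i\ne 0$ and $p\nmid m_i$ by Lemma \ref{li}(\ref{li-1}), and $-v_K(db_i)=m_i$ by Lemma \ref{lr}. In $\omega_i=\sum_{l=1}^i\binom{-a}{i-l}db_l$ (the $i$-th component of $\mathbf{A}(-a)db$) every summand with $l<i$ has $-v_K\le m_a+\lambda_{i-1}<m_i$ — and is smaller still when $b_l\in\mathcal{O}_K$ or $b_l=0$ — so $-v_K(\omega_i)=m_i$. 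Then, by Lemma \ref{lg} and Proposition \ref{ls}(\ref{ls-4}) (which give $-v_L(\chi)=p\,(-v_K(\chi))-(p-1)m_a$ for nonzero $\chi\in\Omega_K^1$), $l_i=-v_L(dc_i)\ge -v_L(\omega_i)=p\,m_i-(p-1)m_a=p\lambda_i-(p-1)\lambda$, which negates the hypothesis of (\ref{lah-2}). Lemma \ref{lah} therefore applies and yields, for $2\le i\le n$, the equality $\lambda_i=(i-1)m_a+m_1$ and the inequality $l_i<p\lambda_i-(p-1)m_a$.

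From $-v_L(\omega_i)\le l_i<p\bigl((i-1)m_a+m_1\bigr)-(p-1)m_a$ and the formula for $-v_L$ above I obtain $-v_K(\omega_i)<(i-1)m_a+m_1$, which is (\ref{aee}): part (a). For (b) I would write $db_i=\sum_{l=1}^i\binom{a}{i-l}\omega_l$ (the $i$-th component of $db=\mathbf{A}(a)\omega$): by (a) every summand with $l\ge 2$ has $-v_K<(i-1)m_a+m_1$, while the $l=1$ summand $\binom{a}{i-1}db_1$ has $-v_K=(i-1)m_a+m_1$ with image $\tfrac{-m_1}{(i-1)!}\mu(\bar{a}^{\,i-1}\bar{b}_1)$ in $\Gr_{(i-1)m_a+m_1}\Omega_K^1$; hence $-v_K(db_i)=(i-1)m_a+m_1>0$, which forces $b_i\ne 0$, $m_i=(i-1)m_a+m_1$, $p\nmid m_i$ (Lemma \ref{li}(\ref{li-1}) and Lemma \ref{lr}), and, comparing with $\partial\bar{b}_i=-m_i\mu(\bar{b}_i)$, the identity (\ref{eq}); this gives (\ref{ei}) and (\ref{eq}), i.e.\ (b). Then (c) follows: if $n=p$, then as $j$ runs over $2,\dots,p$ the residues of $m_j=(j-1)m_a+m_1$ modulo $p$ exhaust every class except the nonzero one $m_1\bmod p$, so some $m_j$ would be divisible by $p$, contradicting (\ref{ei}).

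For (d) I would expand $c_j=\sum_{l=1}^j\binom{-a-\alpha}{j-l}b_l$ as in (\ref{x}); since $-v_L(a+\alpha)=pm_a$ and (\ref{ei}) holds, every summand has $L$-valuation $p\bigl((j-l)m_a+m_l\bigr)=pm_j$, so the image of $c_j$ in $\Gr_{pm_j}L$ is $\sum_{l=1}^j\tfrac{(-\bar{a})^{\,j-l}}{(j-l)!}\,\bar{b}_l$; substituting (\ref{eq}) and applying Lemma \ref{laf} with $x=m_1$ and $y=m_a$ evaluates this sum to $\tfrac{(-m_a\bar{a})^{\,j-1}\bar{b}_1}{\prod_{i=2}^j m_i}$, which is nonzero because $\Gr K\hookrightarrow\Gr L$ is injective and $\Gr K$ is a domain; this is (\ref{er}), i.e.\ (d). Finally, for (e), since $-v_L(c_j)=pm_j$ is a multiple of $p$, an Artin--Schreier reduction of $c_j$ begins by subtracting $P(s)$ with $-v_L(s)=m_j$ and image in $\Gr_{m_j}L$ the (unique, as $k$ is perfect) $p$-th root of the image of $c_j$, namely $\tfrac{(-m_a\sqrt[p]{\bar{a}}\,)^{\,j-1}\sqrt[p]{\bar{b}_1}}{\prod_{i=2}^j m_i}$ — the constant factor, lying in $\mathbb{F}_p$, being its own $p$-th root; all later corrections have $-v_L<m_j$, so $c''_j$ has $-v_L(c''_j)=m_j$ with this image, and $-v_L(dc''_j)=m_j$ since $p\nmid m_j$ (Lemma \ref{lr}); this is (\ref{es}), i.e.\ (e). The main obstacle I anticipate is the set-up of Lemma \ref{lah} — using the running maximum for $\lambda_i$ rather than $m_i$, and $-v_L(dc_i)$ for $l_i$ rather than $-v_L(\omega_i)$ — together with the verification of condition (\ref{lah-2}) via the cancellation pattern of $\omega_i$; a minor but pervasive issue is keeping track of the $b_l$ ($l\ge 2$) that vanish or lie in $\mathcal{O}_K$, which never disturb the estimates because the $l=1$ summand always supplies a positive pole.
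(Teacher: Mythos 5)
Your proposal is correct and follows essentially the same route as the paper: Lemma \ref{lah} applied with $\lambda=m_a$ and $\lambda_i=\max_{1\le l\le i}\bigl((i-l)m_a+m_l\bigr)$, followed by (b)--(e) via $db=\mathbf{A}(a)\omega$, the mod-$p$ residue count, the expansion (\ref{x}) combined with Lemma \ref{laf} and (\ref{eq}), and the $p$-th root of the leading term of $c_j$. The only (harmless) deviations are taking $l_i=-v_L(dc_i)$ instead of the paper's $-v_L(\omega_i)$ and verifying condition (\ref{lah-2}) by contrapositive, which amounts to the same cancellation observation the paper invokes.
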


\begin{proof}
  (\ref{lae-1}) By Lemma \ref{lg} and Proposition \ref{ls}(\ref{ls-4}), we have $-v_L(\chi)=-pv_K(\chi)-(p-1)m_a$ for all $\chi\in\Omega_{K/k}^1$. Hence it suffices to show that for all $2\leq j\leq n$, we have
  \begin{equation}\label{eac}
    -v_L(\omega_j)<p((j-1)m_a+m_1)-(p-1)m_a.
  \end{equation}
  We will apply Lemma \ref{lah} to
  \begin{equation}
  \lambda=m_a,\;\lambda_j=\max_{1\leq i\leq j}((j-i)m_a+m_{i}),\; l_j=-v_L(\omega_j).
  \end{equation}
  
  By definition of $\omega$ (\ref{ej}), we have
  \begin{equation}\label{eab}
  \omega_j=\sum_{i=1}^j\binom{-a}{j-i}db_i.
  \end{equation}
  We have
  \begin{equation}
    -v_L\left(\binom{-a}{j-i}db_i\right)=p((j-i)m_a+m_{i})-(p-1)m_a
  \end{equation}
   by Lemma \ref{lg} and Proposition \ref{ls}(\ref{ls-4}). By (\ref{eab}) and the property of valuation, condition (\ref{lah-2}) of Lemma \ref{lah} is satisfied.
  
  By (\ref{z}) and (\ref{et}), we get
  \begin{equation}
  (n-j)m_a-v_L(\omega_j)\leq\max_{1\leq i\leq n}((n-i)m_a+m_{i}).
  \end{equation}
  Thus condition (\ref{lah-3}) of Lemma \ref{lah} is satisfied.

  Since $m_a,m_1>0$, we have 
  \begin{equation}
  m_a<(j-1)m_a+m_1\leq\max_{1\leq i\leq j}((j-i)m_a+m_{i})
  \end{equation}
  for all $2\leq j\leq n$. Thus condition (\ref{lah-4}) of Lemma \ref{lah} is satisfied.

  By applying Lemma \ref{lah}, we obtain
  \begin{equation}
  -v_L(\omega_j)< p\max_{1\leq i\leq j}((j-i)m_a+m_i)-(p-1)m_a
  \end{equation}
  and 
  \begin{equation}
    \max_{1\leq i\leq j}((j-i)m_a+m_i)=(j-1)m_a+m_1.
  \end{equation}
  Therefore, we get (\ref{eac}) for all $2\leq j\leq n$.
  
  (\ref{lae-2}) Since $db=\mathbf{A}(a)\omega$, we have 
  \begin{equation}
  -v_K(db_j)=m_j\leq\max_{1\leq i\leq j}((j-i)m_a-v_K(\omega_i)).
  \end{equation}
  By (\ref{aee}) for $2\leq i\leq j$, we have
  \begin{equation}
  (j-i)m_a-v_K(\omega_i)<(j-1)m_a+m_1
  \end{equation}
  for $2\leq i\leq j$. On the other hand, we have $\omega_1=db_1$. Thus, we have
  \begin{equation}
  (j-1)m_a-v_K(\omega_1)=(j-1)m_a+m_1.
  \end{equation}
  Thus we have
  \begin{equation}
    -v_K(db_j)=m_j=\max_{1\leq i\leq j}((j-i)m_a-v_K(\omega_i))=(j-1)m_a+m_1>0
  \end{equation}
  by the property of valuation. By conditions (\ref{li-2}) and (\ref{li-1}) of Lemma \ref{li}, we have (\ref{ei}) for all $1\leq j\leq n$. The image of $db_j$ equals the image of $\displaystyle{\binom{a}{j-1}db_1}$ in $\Gr_{m_j}\Omega_{K/k}^1$. Hence we have (\ref{eq}).

  (\ref{lae-5}) By (\ref{lae-2}), we have (\ref{ei}) for all $2\leq j\leq n$. Since $(j-1)m_a+m_1\mod p\;(1\leq j\leq n)$ are different from each other and $m_1,\ldots,m_n$ are all prime to $p$ by (\ref{ei}), we have $n\leq p-1$.
  
  (\ref{lae-3}) By (\ref{y}) and (\ref{ei}), we have $-v_L(c_j)\leq pm_j$.  By (\ref{x}), Lemma \ref{laf} and (\ref{eq}), the image of $c_j$ in $\Gr_{pm_j}L$ equals
  \begin{equation}
  \sum_{i=1}^j\frac{(-\bar{a})^{j-i}}{(j-i)!}\bar{b}_i=\sum_{i=1}^j\frac{(-\bar{a})^{j-i}m_1}{(j-i)!(i-1)!m_i}\bar{a}^{i-1}\bar{b}_1
  =\frac{(-m_a\bar{a})^{j-1}\bar{b}_1}{\prod_{i=2}^jm_i}\neq 0.
  \end{equation}
  Thus we have (\ref{er}).

  (\ref{lae-4}) Since we have (\ref{er}) and $m_j$ is prime to $p$ by (\ref{ei}), we have (\ref{es}). Hence, the image of $c''_j$ in $\Gr_{m_j}L$ equals $\displaystyle{\frac{\left(-m_a\sqrt[p]{\bar{a}}\right)^{j-1}\sqrt[p]{\bar{b}_1}}{\prod_{i=2}^jm_i}}$, since we have $\sqrt[p]{x}=x$ for $x\in\mathbb{F}_p$.
\end{proof}

We will now express $\max(-v_L(dc_n),-v_L(dc''_n))$ in terms of $a,b$.

\begin{coro}\label{ra}
  \begin{enumerate}[(a)]
    \item\label{ra-1} The following conditions are equivalent:
  \begin{enumerate}[(i)]
  \item\label{ra-c-1} $-v_L(dc_n)<-v_L(dc''_n)$.
  \item\label{ra-c-2} $-v_L(dc_n)<\max_{1\leq j\leq n}((n-j)m_a+m_j)$.
  \item\label{ra-c-3} $-v_L(dc_n)<(n-1)m_a+m_1$.
  \end{enumerate}
\item\label{ra-2} We have
  \[
  m'_n=-v_L(dc'_n)\leq \max(-v_L(dc_n),-v_L(dc''_n))
  \]
  \begin{equation}\label{eu}
  =\max\left(\max_{1\leq i\leq n}\left((n-i-p+1)m_a-pv_K\left(\omega_i\right)\right),(n-1)m_a+m_1\right).
  \end{equation}
  \end{enumerate}
\end{coro}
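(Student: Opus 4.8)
The plan is to record a closed form for $-v_L(dc_n)$, deduce the three-way equivalence in (\ref{ra-1}), and then obtain (\ref{eu}) by a short two-case analysis keyed to (\ref{ra-1}). I would begin by noting the identity
\[
-v_L(dc_n)=\max_{1\le i\le n}\bigl((n-i-p+1)m_a-pv_K(\omega_i)\bigr),
\]
obtained by feeding the relation $-v_L(\chi)=-pv_K(\chi)-(p-1)m_a$ for $\chi\in\Omega_K^1$ (from Lemma \ref{lg} and Proposition \ref{ls}(\ref{ls-4}), exactly as in the proof of Proposition \ref{lae}(\ref{lae-1})) into equation (\ref{z}) of Lemma \ref{lac}(\ref{lac-1}). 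This identifies the first entry of the maximum in (\ref{eu}).

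For (\ref{ra-1}): the implication (\ref{ra-c-3})$\Rightarrow$(\ref{ra-c-2}) is immediate, since the $j=1$ summand of $\max_{1\le j\le n}((n-j)m_a+m_j)$ is $(n-1)m_a+m_1$. For (\ref{ra-c-2})$\Rightarrow$(\ref{ra-c-3}): condition (\ref{ra-c-2}) is precisely the hypothesis (\ref{et}) of Proposition \ref{lae}, so (\ref{ei}) holds, i.e.\ $m_j=(j-1)m_a+m_1$ for all $j$; then $(n-j)m_a+m_j=(n-1)m_a+m_1$ for every $j$, so the maximum in (\ref{ra-c-2}) collapses to $(n-1)m_a+m_1$ and (\ref{ra-c-2}) becomes (\ref{ra-c-3}). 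For (\ref{ra-c-1})$\Rightarrow$(\ref{ra-c-2}): Lemma \ref{laee} gives $-v_L(dc''_n)\le\max_{1\le i\le n}((n-i)m_a+m_i)$, so $-v_L(dc_n)<-v_L(dc''_n)$ yields (\ref{ra-c-2}). Conversely, (\ref{ra-c-2}) makes Proposition \ref{lae}(\ref{lae-4}) applicable, giving $-v_L(dc''_n)=(n-1)m_a+m_1$, while the equivalent form (\ref{ra-c-3}) says $-v_L(dc_n)<(n-1)m_a+m_1=-v_L(dc''_n)$, which is (\ref{ra-c-1}).

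For (\ref{ra-2}): the equality $m'_n=-v_L(dc'_n)$ holds because $-v_L(c'_n)=m'_n$ with $p\nmid m'_n$ by Lemma \ref{lac}(\ref{lac-2}), and differentiation preserves $-v_L$ on elements of valuation prime to $p$ by Lemma \ref{lr}(\ref{lr-2}); the inequality $-v_L(dc'_n)\le\max(-v_L(dc_n),-v_L(dc''_n))$ is the ultrametric inequality applied to $dc'_n=dc_n+dc''_n$. It remains to compute $\max(-v_L(dc_n),-v_L(dc''_n))$, and here I split on the equivalent conditions of (\ref{ra-1}). If they hold, then $-v_L(dc''_n)=(n-1)m_a+m_1$ and, by (\ref{ra-c-3}), $-v_L(dc_n)<(n-1)m_a+m_1$, so the left maximum is $(n-1)m_a+m_1$; moreover $-v_L(dc_n)=\max_{1\le i\le n}((n-i-p+1)m_a-pv_K(\omega_i))<(n-1)m_a+m_1$, so the right-hand maximum in (\ref{eu}) is also $(n-1)m_a+m_1$. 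If they fail, then negating (\ref{ra-c-2}) gives $-v_L(dc_n)\ge\max_{1\le j\le n}((n-j)m_a+m_j)$, hence $-v_L(dc_n)\ge(n-1)m_a+m_1$ and, by Lemma \ref{laee}, $-v_L(dc_n)\ge-v_L(dc''_n)$; so both sides of (\ref{eu}) equal $-v_L(dc_n)=\max_{1\le i\le n}((n-i-p+1)m_a-pv_K(\omega_i))$.

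The step I expect to cause the most trouble is (\ref{ra-c-2})$\Rightarrow$(\ref{ra-c-3}): the crux is to recognize that (\ref{ra-c-2}) is exactly the hypothesis of Proposition \ref{lae}, whose conclusion (\ref{ei}) forces the a priori distinct quantities $\max_{1\le j\le n}((n-j)m_a+m_j)$ and $(n-1)m_a+m_1$ to coincide. Once this is secured, the remainder is routine manipulation of valuations together with the graded-module dictionary assembled in Sections \ref{sa}--\ref{sb}.
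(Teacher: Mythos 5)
Your argument is correct and takes essentially the same route as the paper: (i)$\Rightarrow$(ii) via Lemma \ref{laee}, (iii)$\Rightarrow$(ii) trivially, (ii)$\Rightarrow$(i),(iii) by invoking Proposition \ref{lae} under hypothesis (\ref{et}), and part (b) from $dc'_n=dc_n+dc''_n$ plus the case split on (a), with (\ref{z}) rewritten through $-v_L(\chi)=-pv_K(\chi)-(p-1)m_a$. The only nitpick is that condition (\ref{ra-c-2}) is a strict inequality whereas (\ref{et}) is non-strict, so it implies (\ref{et}) rather than being identical to it --- harmless, since the implication is all you use.
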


\begin{proof}
  (\ref{ra-1}) $\text{(\ref{ra-c-1})}\Rightarrow\text{(\ref{ra-c-2})}$: This follows from (\ref{el}).

  $\text{(\ref{ra-c-3})}\Rightarrow\text{(\ref{ra-c-2})}$: This clearly holds.

  $\text{(\ref{ra-c-2})}\Rightarrow\text{(\ref{ra-c-1}),(\ref{ra-c-3})}$: This follows from (\ref{es}).

  (\ref{ra-2}) 
  Since $dc_n+dc''_n=dc'_n$, we have $m'_n\leq\max(-v_L(dc_n),-v_L(dc''_n))$. By (\ref{ra-1}), (\ref{z}), and (\ref{es}), we have (\ref{eu}).
\end{proof}

We will now prove our main theorem.

\begin{proof}[Proof of Theorem \ref{ta}]
  It suffices to show the case where $j=n$, since the case where $j<n-1$ can be reduced to the case $j=n$ by replacing $n$ by $j$.
  
Let
\begin{equation}
  s_n=\psi_{L/K}\left(\max\left(\max_{1\leq i\leq n}\left(\frac{n-i}{p}m_a-v_K\left(\omega_i\right)\right),\frac{(n+p-2)m_a+m_1}{p}\right)\right).
\end{equation}
It suffices to show $m'_n=\psi_{L/K}(r_n)=s_n$ to complete the proof, since $\psi_{L/K}$ is injective by Proposition \ref{ls}(\ref{ls-3}).

Since $\displaystyle{\frac{(n+p-2)m_a+m_1}{p}>m_a}$, we have
\begin{equation}
\max\left(\max_{1\leq i\leq n}\left(\frac{n-i}{p}m_a-v_K\left(\omega_i\right)\right),\frac{(n+p-2)m_a+m_1}{p}\right)>m_a.
\end{equation}
By Proposition \ref{ls}(\ref{ls-3}), we have $\psi_{L/K}(x)=px-(p-1)m_a$ if $x\geq m_a$. Then by Corollary \ref{ra}(\ref{ra-2}), we get
\begin{equation}
s_n=\max(-v_L(dc_n),\;-v_L(dc''_n))\geq m'_n.
\end{equation}
We have $m'_n=s_n$ when $-v_L(dc_n)\neq -v_L(dc''_n)$. It suffices to show that we have $m'_n=s_n$ also when $-v_L(dc_n)=-v_L(dc''_n)$.

Assume $-v_L(dc_n)=-v_L(dc''_n)$. Then by (\ref{el}), the hypothesis (\ref{et}) of Proposition \ref{lae} is satisfied. By Proposition \ref{lae}(\ref{lae-5}), we have $n\leq p-1$. By Lemma \ref{lg} and Proposition \ref{ls}(\ref{ls-4}), we have $-v_L(\omega_i)=-pv_K(\omega_i)-(p-1)m_a\equiv m_a\mod p$ for all $1\leq i\leq n$. Thus, by Lemma \ref{lac}(\ref{lac-1}), there exists a unique integer $1\leq j\leq n$ satisfying $-v_L(dc_n)=(n-j)m_a-v_L(\omega_j)\equiv (n-j+1)m_a\mod p$. Combining with (\ref{es}), we get
\begin{equation}
v_L(dc_n)-v_L(dc''_n)=0\equiv (j-2)m_a+m_1\mod p.
\end{equation}
Since $m_i=(i-1)m_a+m_1$ is prime to $p$ for all $1\leq i\leq n$ by (\ref{ei}), we have $j\neq 2,\ldots,n+1$. Therefore, since $j$ is an integer satisfying $1\leq j\leq n$, we have $j=1$. Hence, we have
\begin{equation}\label{ew}
m_n=(n-1)m_a+m_1=-v_L(dc''_n)=-v_L(dc_n)=(n-1)m_a-v_L(\omega_1).
\end{equation}
Therefore, we have $m_1=-v_L(\omega_1)$. By definition, we have $\omega_1=db_1$. Thus we have $m_1=pm_1-(p-1)m_a$ by Lemma \ref{lg} and Proposition \ref{ls}(\ref{ls-4}). Hence we have $m_a=m_1$. By (\ref{ei}) we get $m_j=jm_a$ for all $1\leq j\leq n$. Hence we have $s_n=-v_L(dc_n)=-v_L(dc''_n)=m_n$.

We have only to check that the image of $dc_n+dc''_n=dc'_n$ in $\Gr_{m_n}\Omega_{L/k}^1$ does not vanish. Define the $k$-linear isomorphism $\mu_{L}:\Gr L\to\Gr \Omega_{L/k}^1$ as in Lemma \ref{lr}(\ref{lr-1}) and $\theta\in \Gr_{-(p-1)m_a}L$ as in Lemma \ref{lg}. Since we have (\ref{ew}), the image of $dc_n$ in $\Gr_{m_n}\Omega_{L/k}^1$ equals that of $\displaystyle{\binom{-\alpha}{n-1}\omega_1}$ by Lemma \ref{lac}(\ref{lac-1}). Thus, by Lemma \ref{lr}(\ref{lr-2}), the image of $dc_n$ in $\Gr_{m_n}\Omega_{L/k}^1$ equals $\displaystyle{-m_1\frac{(-\sqrt[p]{\bar{a}})^{n-1}}{(n-1)!}\mu_{L}(\theta\bar{b}_1)}$.

Meanwhile, by Proposition \ref{lae}(\ref{lae-4}), Lemma \ref{lr}(\ref{lr-2}), and $m_j=jm_a=jm_1$ for all $1\leq j\leq n$, the image of $dc''_n$ in $\Gr_{m_n}\Omega_{L/k}^1$ equals $\displaystyle{-m_1\frac{(-\sqrt[p]{\bar{a}})^{n-1}}{(n-1)!}\mu_{L}\left(\sqrt[p]{\bar{b}_1}\right)}$.

Define $u:\Gr_{pm_a}L\to\Gr_{m_a}L$ as in Proposition \ref{ls}(\ref{ls-2}). Then the image of $dc_n+dc''_n=dc'_n$ in $\Gr_{m_n}\Omega_{L/k}^1$ equals $\displaystyle{-m_1\frac{(-\sqrt[p]{\bar{a}})^{n-1}}{(n-1)!}}$ times
\begin{equation}
\mu_{L}\left(\theta\bar{b}_1+\sqrt[p]{\bar{b}_1}\right)=\mu_{L}(u(\bar{b}_1))
\end{equation}
by Proposition \ref{ls}(\ref{ls-2}). Since $n\leq p-1$, by condition (\ref{li-3}) of Lemma \ref{li} and Proposition \ref{ls}(\ref{ls-2}), we have $u(\bar{b}_1)\neq 0$. Thus the image of $dc'_n$ in $\Gr_{m_n}L$ does not vanish. 
\end{proof}

\begin{exam}
  \begin{enumerate}
  \item
    We give an example for $n=2$, the simplest case where Theorem \ref{ta} can be applied.

    The algebraic group $G(2)$ is the Heisenberg group. The extension $M_2/K$ is defined by
  \begin{equation}
  \begin{pmatrix}
    1&x^p&{y_2}^p\\
    0&1&{y_1}^p\\
    0&0&1
  \end{pmatrix}
  =
  \begin{pmatrix}
    1&a&b_2\\
    0&1&b_1\\
    0&0&1
  \end{pmatrix}
  \begin{pmatrix}
    1&x&y_2\\
    0&1&y_1\\
    0&0&1
  \end{pmatrix}
  \end{equation}
  for some $a,b_1,b_2\in K$ satisfying the conditions of Lemma \ref{li}. We have
  \begin{equation}
  \omega_1=db_1,\;\omega_2=db_2-adb_1.
  \end{equation}
  By Theorem \ref{ta}, we have
  \begin{equation}
  U_{M_2/K}=U_{M_1/K}\cup\{r_2\}
  \end{equation}
  where
  \begin{equation}\label{ey}
  r_2=\max\left(-v_K(db_2-adb_1),\frac{m_a}{p}+m_1,m_a+\frac{m_1}{p}\right).
  \end{equation}
  We can calculate $U_{M_1/K}$ by Proposition \ref{ls}(\ref{ls-1}), since $M_1/K$ is an abelian extension.

\item We give an example where the maximum of the right-hand side of (\ref{ez}) is achieved by the first term or the second term, depending on the parameters $\eta,\eta'$.

  Let $p>2$, $n=p-1$, $\eta,\eta'\in\mathbb{Z}_{\geq 0}$. For $1\leq j\leq p-1$, define $f_j(x)\in\mathbb{Z}\left[\frac{1}{j!},x^{-1}\right]$ as a polynomial of $x^{-1}$ satisfying
  \begin{equation}
  \frac{df_j}{dx}=-\binom{x^{-\eta p-1}}{j-1}x^{-\eta'p-2}.
  \end{equation}
  Let $M_n/K$ be the Galois extension defined by
  \begin{equation}
\begin{pmatrix}
  \mathbf{A}(x^p)&F(y)\\
  0&1
\end{pmatrix}
=
\begin{pmatrix}
  \mathbf{A}(a)&b\\
  0&1
\end{pmatrix}
\begin{pmatrix}
  \mathbf{A}(x)&y\\
  0&1
\end{pmatrix},
\end{equation}
with $a= t^{-\eta p-1},\;b_j=\epsilon f_j(t)\;(1\leq j \leq n)$, where $t$ is a uniformizer of $K$ and $\epsilon\in k-\mathbb{F}_p$. These $a,b$ satisfy the conditions of Lemma \ref{li}. Then by Proposition \ref{ls}(\ref{ls-1}), we have $U_{M_1/K}=\{\eta p+1,\eta'p+1\}$.

We have $db=-\epsilon\mathbf{A}(t^{-\eta p-1})\mathbf{v}(1)t^{-\eta'p-2}dt$. Thus $\omega=-\epsilon\mathbf{v}(1)t^{-\eta'p-2}dt$. 
Thus we have 
$-v_K(\omega_1)=\eta'p+1$ and $-v_K(\omega_j)=-\infty$ for $2\leq j\leq p-1$.
Hence, by Theorem \ref{ta}, we have
\begin{equation}
  r_j=\max\left(\frac{(j-1)(\eta p+1)+p(\eta'p+1)}{p},\frac{(j+p-2)(\eta p+1)+\eta'p+1}{p}\right)
\end{equation}
\begin{equation}
  =(j-1)\left(\eta+\frac{1}{p}\right)+\eta'+1+(p-1)\max(\eta,\eta').
\end{equation}
Thus, if we have $\eta\geq\eta'$ (resp. $\eta\leq\eta'$), then the maximum of the right-hand side of (\ref{ez}) is achieved by the first (resp. second) term.
  \end{enumerate}
\end{exam}

\section*{Declarations}

No funding was received for conducting this study. The author has no financial or proprietary interests in any material discussed in this article. We do not analyse or generate any datasets, because our work proceeds within a theoretical and mathematical approach.

\bibliography{ramgr_04_resub_manumath}

\end{document}